\renewcommand{\epsilon}{\varepsilon}
\renewcommand{\rho}{\varrho}
\renewcommand{\phi}{\varphi}
\begin{document}
\newtheorem{definition}{Definition}[section]
\newtheorem{theorem}[definition]{Theorem}
\newtheorem{proposition}[definition]{Proposition}
\newtheorem{corollary}[definition]{Corollary}
\newtheorem{lemma}[definition]{Lemma}
\newtheorem{remark}[definition]{Remark}  % de-emphesize it TK
\newtheorem{conjecture}[definition]{Conjecture}
\newtheorem{problem}[definition]{Problem}
\newtheorem{example}[definition]{Example} % de-emphesize it TK
\title{Cohomology in electromagnetic modeling}
\author{{\em Pawe\l{} D\l{}otko}\footnote{This Author's work was partially supported by MNiSW grant N N206 625439.} \\
        Institute of Computer Science \\
        Jagiellonian University \\
        ul.~St.~\L{}ojasiewicza~6 \\
        30-348~Krak\'ow, Poland
  \and
        {\em Ruben Specogna} \\
        Universit\`a di Udine,\\
        Dipartimento di Ingegneria Elettrica,\\
        Gestionale e Meccanica,\\
        Via delle Scienze 208,\\
        33100 Udine, Italy.\\
        ruben.specogna@uniud.it}
\maketitle
\newpage
\begin{abstract}
Electromagnetic modeling provides an interesting context to present a link between physical phenomena and homology and cohomology theories. Over the past twenty-five years, a considerable effort has been invested by the computational electromagnetics community to develop fast and general techniques for potential design. When magneto-quasi-static discrete formulations based on magnetic scalar potential are employed in problems which involve conductive regions with holes, \textit{cuts} are needed to make the boundary value problem well defined. While an intimate connection with homology theory has been quickly recognized, heuristic definitions of cuts are surprisingly still dominant in the literature.

The aim of this paper is first to survey several definitions of cuts together with their shortcomings. Then, cuts are defined as generators of the first cohomology group over integers of a finite CW-complex. This provably general definition has also the virtue of providing an automatic, general and efficient algorithm for the computation of cuts. Some counter-examples show that heuristic definitions of cuts should be abandoned. The use of cohomology theory is not an option but the invaluable tool expressly needed to solve this problem.
\\
{\em keywords: algebraic topology, (co)homology, computational electromagnetics, cuts}
\end{abstract}
%
%\newpage
%
\tableofcontents
\newpage

\section{Introduction}
There is a remarkable interest in the efficient numerical solution of large-scale three-dimensional electromagnetic problems by Computer-Aided Engineering (CAE) softwares which enables a rapid and cheap design of practical devices together with their optimization.

Electromagnetic phenomena are governed by Maxwell's laws \cite{maxwell} and constitutive relations of materials. This paper focuses on the numerical solution of magneto-quasi-static Boundary Value Problems (BVP)---also called eddy-current problems---which neglect the \textit{displacement current} in the Amp\`{e}re--Maxwell's equation \cite{maxwell}, \cite{carpenter}, \cite{bossavitcompel}. This well-studied class has quite a big number of industrial applications such as non-destructive testing, electromagnetic breaking, metal separation in waste, induction heating, metal detectors, medical imaging and hyperthermia cancer treatment.

The range of CAE applications is sometimes bounded by the high computational cost needed to obtain the solution, hence state-of-the-art numerical methods are usually sought. Recently, the Discrete Geometric Approach (DGA) gained popularity, becoming an attractive method to solve BVP arising in various physical theories, see for example \cite{kron}--%\cite{branin}, \cite{tonti_classificazione}, \cite{weiland}, %\cite{strang},
%\cite{mattiussi}, \cite{tonti_marsiglia},  \cite{Bossa98}, \cite{tarh}, \cite{sapporo}, \cite{tonti},
\cite{bossa_libro}.
The DGA bears strong similarities to compatible or mimetic discretizations \cite{mimetic}, \cite{compatible}, discrete exterior calculus \cite{dec} and finite element exterior calculus \cite{arnoldima}, \cite{arnoldacta}, \cite{arnold}.
All these methods present some pedagogical advantages with respect to the standard widely used Finite Element Method (FEM).

First of all, the topological nature of Maxwell's equations and the geometric structure behind them allows to reformulate the mathematical description of electromagnetism directly in algebraic form. Such a reformulation can be formalized in an elegant way by using algebraic topology \cite{branin}, \cite{tonti_classificazione}, \cite{mattiussi}, \cite{tonti_marsiglia}, \cite{compatible}, \cite{arnoldima}, \cite{arnoldacta}, \cite{arnold}. Taking advantage of this formalism, as illustrated in Section 3, physical variables are modeled as cochains and Maxwell's laws are enforced by means of the coboundary operator.
Information about the metric and the physical properties of the materials is encoded in the constitutive relations, that are modeled as discrete counterparts of the Hodge star operator \cite{mattiussi}, \cite{tarh}, \cite{dho_hiptmair}, \cite{compatible}, \cite{arnold} usually called \textit{constitutive matrices} \cite{tonti}.
By combining Maxwell's with constitutive matrices, an algebraic system of equations is directly obtained, yielding to a simple, accurate and efficient numerical technique.
The difference of the DGA with respect to similar methods lies in the computation of the constitutive matrices, which in the DGA framework is based on a closed-form geometric construction. For a computational domain discretized by using a geometric realization of a polyhedral cell complex, one may use the techniques described in \cite{cmamepoly}, \cite{jcppoly} and references therein, without losing the symmetry, positive-definiteness and consistency of the constitutive matrices which guarantee the convergence of the method. Hence, we consider the most general situation of dealing with a polyhedral cell complex.

Our purpose is not to present the widely known DGA or similar discretizations, but to use it as a working framework. This choice does not limit the generality of the results, since the standard Finite Element Method (FEM) and the Finite Differences (FD) can be easily reinterpreted in the DGA framework as in \cite{mattiussi}, \cite{Bossa98}, \cite{tarh}, \cite{gal_hodge}, \cite{marrone}, \cite{compatible}, \cite{arnoldima}, \cite{arnoldacta}, \cite{arnold}. Consequently our results can be extended, without any modification, to the corresponding widely used FEM formulation.

The paper is focused on a particular application of algebraic topology, namely the potential design for the efficient numerical solution of eddy-currents Boundary Value Problems (BVP).
Electromagnetic potentials are auxiliary quantities frequently used to enforce some of the Maxwell's laws implicitly.
There are two families of formulations for magneto-quasi-static problems, depending on the set of potentials chosen, see for example \cite{carpenter}, \cite{bossavitcompel}, \cite{bossa_libro}. To better understand the link between (co)homology theory and physics, our attention is focused on the $h$-oriented geometric formulations, namely the $T$-$\Omega$ \cite{sst}, \cite{cmame}, which are based on a magnetic scalar potential $\Omega$. Those are much more efficient than the complementary family of $b$-oriented $A$ and $A$-$\chi$ formulations \cite{spectre}, both in terms of memory requirements and simulation time. The main reason is that usually $h$-oriented formulations require about an order of magnitude less unknowns.
Nonetheless, when $h$-oriented formulations involve electrically conductive regions with holes (i.e., the first homology group of some conductor is non-trivial), the design of potentials is not straightforward. \textit{Cuts} are needed to be introduced to make the BVP well defined.
How to define cuts and devise an efficient and automatic algorithm to compute them has been an intellectual challenge for the computational electromagnetics community for the last twenty-five years. While a connection of this issue with homology theory was quickly recognized by Kotiuga \cite{k1} more than twenty years ago, heuristic definition of cuts based on intuition are surprisingly still dominant in the literature.

The aim of the paper is to rigorously present a systematic design of the potentials employed in $h$-oriented formulations by taking advantage of homology and cohomology theories. In particular, at the end of the presentation we are able to formally demonstrate that cuts are generators of the first cohomology group over integers of the insulating region.
The originality of the approach presented in this paper lies also in the fact that the design of potentials is tackled directly within a topological setting. In fact, thanks to the reformulation of Maxwell's laws by using the coboundary operator, homology and cohomology with integer coefficients are employed from the beginning for the potential design in place of the standard \textit{de Rham cohomology}, see for example \cite{bott}, routinely used in the FEM context, see for example \cite{ren}, \cite{k1}, \cite{GrossKotiuga}, \cite{henrotte}. In the FEM framework, the so-called \textit{non-local basis functions} are added to the set of usual basis functions to be able to span the de Rham first cohomology group, see for example \cite{ren}, \cite{henrotte}, \cite{dular_any}, \cite{dular_coupling}, \cite{dular}, \cite{dular_h}, \cite{bossavit_mostgeneral}, \cite{dular_jcam}. Moreover, employing the DGA, new insights into the formulation can be presented by exploiting the dualities arising when, as in the DGA framework, two interlocked cell complexes---one dual of the other---are employed. For example, the physical interpretation of the non-local basis functions as non-local Faraday's equations will become apparent.

The second purpose of the paper is to present a survey on definitions of cuts already presented in the literature showing their shortcomings. Concrete counter-examples show why heuristic definitions should be abandoned and that cohomology is not one of the possible options but something which is expressly needed to face this problem.

The paper is structured as follows.
In Section~\ref{sec:ATinCP}, a survey on the relevant topics of algebraic topology is provided together with a link to electromagnetic modeling. In Section~\ref{sec:MaxwellAlgForm}, Maxwell's laws casted in algebraic form are recalled.
Section~\ref{sec:TowardADefinitionOCuts} shows the problem related with expressing Amp\`{e}re's law with a magnetic scalar potential when dealing with conducting regions with holes. Then, to solve this issue, \textit{cuts} are defined as generators of the first cohomology group over integers of the insulating region. In Section~\ref{sec:F-Omega}, the $T$-$\Omega$ geometric formulation to solve magneto-quasi-static BVP is described.
Section~\ref{coh} contains a survey of the definitions of cuts presented in the literature together with an illustration of their shortcomings. In Section~\ref{sec:numericalEx}, a short discussion on how to compute the cohomology generators is presented. Finally, in Section~\ref{sec:conclusions}, the conclusions are drawn.

\section{Algebraic topology in computational physics}
\label{sec:ATinCP}

\subsection{Basic concepts in algebraic topology}
\label{sec:BasicAT}
In this Section, some basic concepts of algebraic topology are reviewed.
Let us first introduce the concept of \textit{finite regular CW-complex}.
An \textit{n-cell} $e^n$ is an open subset of a Hausdorff space~$X$ homeomorphic to the $n$-dimensional unit ball~$B_1^n(0) \subset \mathbb{R}^n$.
An $n$-cell~$e^n$ is said to be \textit{attached} to the closed subset $K \subset X$ if there exists a continuous map $f : \overline{B_1^n(0)} \to \overline{e^n}$ such that~$f$ maps the open
ball~$B_1^n(0)$ homeomorphically onto~$e^n$ and $f(\partial B_1^n(0)) \subset K$ in a way that $e^n \cap K = \emptyset$.
 The map $f$ is referred to as \textit{characteristic map}. The finite CW-complexes are defined in the following way:
\begin{definition} \label{defcw}
Let~$X$ denote a Hausdorff space. A closed subset~$K \subset X$ is
called a {\em (finite) CW-complex\/} of dimension~$N$, if there exists an
ascending sequence of closed subspaces $K^0 \subset K^1 \subset \ldots
\subset K^N = K$ such that the following holds.
\begin{itemize}
\item[(i)] $K^0$ is a finite space.
\item[(ii)] For $n \in \{1,\ldots,N\}$, the set~$K^n$ is obtained from~$K^{n-1}$
by attaching a finite collection $K_n$ of $n$-cells.
\end{itemize}
\end{definition}
In this case, the subset~$K^n$ is called the {\em $n$-skeleton\/} and
the elements of~$K^0$ are called the {\em vertices\/} of~$K$.
An $N$-dimensional CW-complex is called {\em regular\/} if for
each cell~$e^n$, where $n \in \{1,\ldots,N\}$, there exists a characteristic map
$f : \overline{B_1^n(0)} \to \overline{e^n}$ which is a homeomorphism on $\overline{B_1^n(0)}$.
In this case, we say that the $m$-cell~$e^m$ is a {\em face\/} of an
$n$-cell~$e^n$, if the inclusion $e^m \subset \overline{e^n}$ holds, see~\cite{cwHomology}. Moreover, since our aim is to model physical objects, we restrict to the case of regular CW-complexes embedded in $\mathbb{R}^3$.
Finally, by a \textit{polyhedral mesh} we mean a cellular decomposition of the considered space, which is a regular CW-complex such that each cell of the complex is a polyhedron. In the paper we are going to use the terms mesh and regular CW-complex interchangeably.

Let $\mathcal{K}$ be a collection of cells of the regular CW-complex $K$. Let $\kappa : \mathcal{K}_{i-1} \times \mathcal{K}_i \rightarrow \{ -1 , 0 , 1 \}$ for $i \in \mathbb{Z}$ be the so-called {\em incidence index} which assigns to a pair of cells their incidence number (for further details consult~\cite{cwHomology}). Let $G$ denote the module of integers ($\mathbb{Z}$), real ($\mathbb{R}$) or complex ($\mathbb{C}$) numbers. The group of formal sums $\sum_{e \in \mathcal{K}_i} \alpha_e e$, where $\alpha_e \in G$ for every $e \in \mathcal{K}_i$, is the group of $i$-{\em chains} of the complex $\mathcal{K}$ and is denoted by $C_i( \mathcal{K} , G )$. %For a chain $c$, by $|c|$ we denote the support of $c$.
For a chain $c = \sum_{e \in \mathcal{K}_i} \alpha_e e$ the support $|c|$ of $c$ consist of all elements $e \in \mathcal{K}_i$ such that $\alpha_e \not = 0$.
For two chains $c = \sum_{e \in \mathcal{K}_i} \alpha_e e$ and $d = \sum_{e \in \mathcal{K}_i} \beta_e e$ their scalar product is $\langle c , d \rangle = \sum_{e \in \mathcal{K}_i} \alpha_e \beta_e$.
The group of cochains $C^i( \mathcal{K} , G )$ is formally defined as the group of maps from elements of $C_i( \mathcal{K} , G )$ to $G$ with coordinatewise addition. However, it is possible and convenient for the computations to represent a cochain as a chain. Namely, to determine the value of a map $c^* : C_i( \mathcal{K} , G ) \rightarrow G$ on any $i$-chain, it suffices to know the value of $c^*$ on every $e \in \mathcal{K}_i$. In this way, it is possible to associate to the cochain $c^*$ a chain $c$ such that for any other chain $d \in C_i( \mathcal{K} , G )$ the value of cochain $c^*$ on chain $d$ is equal to $\langle c ,d \rangle$.
For a cochain $c^*$ its support $|c^*|$ consists of all the cells whose value of $c^*$ is nonzero.

In this paper, two kind of cochains are considered. The first are the integer-valued cochains---for example, the representatives of the first cohomology group generators over integers. The second are the complex-valued cochains, which model physical variables in the proposed application as discussed in Section \ref{sec:ComplexCochainsGeoEntities}.

Let us define the boundary map $\partial_i : C_i( \mathcal{K} , G ) \rightarrow C_{i-1}( \mathcal{K} , G )$. For an element $e \in \mathcal{K}_i$ we define
\[ \partial_i e = \sum_{f \in \mathcal{K}_{i-1}} \kappa( f , e ) f \]
and extend it linearly to the map from $C_i( \mathcal{K} , G )$ to $C_{i-1}( \mathcal{K} , G )$.
The coboundary map $\delta^i : C^i( \mathcal{K} , G ) \rightarrow C^{i+1}( \mathcal{K} , G )$ is defined for $e \in \mathcal{K}_i$ by
\[ \delta^i e = \sum_{f \in \mathcal{K}_{i+1}} \kappa(e,f) f \]
and extended linearly to the map from $C^i( \mathcal{K} , G )$ to $C^{i+1}( \mathcal{K} , G )$.
It is standard that $\partial_{i-1} \partial_{i} = \delta^{i} \delta^{i-1} = 0$ for every $i \in \mathbb{Z}$, see~\cite{Hatcher}. Moreover, the coboundary map is dual to boundary map in homology. In fact, it can be equivalently defined with the equality $\langle \delta c^* , d \rangle = \langle c^* , \partial d \rangle$ for every $c^* \in C^{i-1}(\mathcal{K},G)$ and for every $d \in C_i(\mathcal{K},G)$, see~\cite{Hatcher}.

The boundary operator gives rise to a classification of chains. The group of $i$-{\em cycles} is $Z_i( \mathcal{K} , G ) = \{ c \in C_i( \mathcal{K} , G ) | \partial c = 0 \}$. The group of $i$-{\em boundaries} is $B_i( \mathcal{K} , G ) = \{ c \in C_i( \mathcal{K} , G ) | \text{ there exist } d \in C_{i+1}( \mathcal{K} , G ) | \partial d = c \}$. Intuitively, a cycle is a chain whose boundary vanishes while a boundary is a cycle which can be obtained as the boundary of some higher dimensional chain. The $i$th {\em homology group} is the quotient $H_i( \mathcal{K} , G ) = Z_i(\mathcal{K} , G) / B_i( \mathcal{K} , G )$. The cycles that are not boundaries are nonzero in $H_i( \mathcal{K} , G )$. The cycles that differ a by a boundary are in the same equivalence class. Given a chain $c$, by $[c]$ we denote its homology class, i.e. the class containing all the cycles homologous to $c$. By generators of the $i$th homology group we mean a minimal set of classes which generates $H_i( \mathcal{K} , G )$. In the following, for the sake of brevity, by generators we also mean the cycles being representatives of the considered classes that generate $H_i( \mathcal{K} , G )$.

Dually, with the coboundary operator the cochains may be classified. The group of $i$-{\em cocycles} is $Z^i( \mathcal{K} , G ) = \{ c \in C^i( \mathcal{K} , G ) | \delta c = 0 \}$. The group of $i$-{\em coboundaries} is $B^i( \mathcal{K} , G ) = \{ c \in C^i( \mathcal{K} , G ) | \text{ there exist } d \in C^{i-1}( \mathcal{K} , G ) | \delta d = c \}$. The $i$th {\em cohomology group} is the quotient $H^i( \mathcal{K} , G ) = Z^i(\mathcal{K} , G) / B^i( \mathcal{K} , G )$. By generators of the $i$th cohomology group we mean a minimal set of classes which generates $H^i( \mathcal{K} , G )$. Also in this case, for the sake of brevity, by generators we also mean the cocycles being representatives of the considered classes that generate $H^i( \mathcal{K} , G )$.

In the following, we will use also the standard concept of the so-called \emph{relative} (co)homology. In relative (co)homology, some parts of the complex may be considered irrelevant. Let $\mathcal{K}$ be the considered regular CW-complex and $\mathcal{S} \subset \mathcal{K}$ be a closed sub-complex of $\mathcal{K}$. The concept of relative homology bases on the definition of {\em relative chains} $C_i( \mathcal{K} , \mathcal{S} , G ) = C_i( \mathcal{K} , G ) / C_i( \mathcal{S} , G )$. The definition of relative cycles $Z_i( \mathcal{K} , \mathcal{S} , G ) $, relative boundaries $B_i( \mathcal{K} , \mathcal{S} , G ) $  and relative homology group $H_i( \mathcal{K} , \mathcal{S} , G ) $ remain unchanged with respect to the absolute version once relative chains are used. % in place of the absolut chains.
Exactly the same approach is also employed in defining relative cohomology.

In theorem \ref{th:homologyAreTorsionFree}, it will be recalled that there is no \emph{torsion}~\cite{Hatcher} in the homology and cohomology groups dealing with regular CW-complexes embedded in $\mathbb{R}^3$.
A direct consequence of the Universal Coefficient Theorem for cohomology, see~\cite{Hatcher}, is that in the considered torsion-free case the generators of the cohomology group over integers and the generators of the cohomology group over complex numbers are in a bijective correspondence (for further details see~\cite{tc}). Therefore, all the computations are rigorously performed by using integer arithmetic and the obtained cohomology generators are valid cohomology generators also in the case of complex coefficients.
The theory of (co)homology computations for regular CW-complexes can be found in \cite{cwHomology}. The important point is that for (co)homology computations only the incidence indexes  $\kappa$ between cells of $K$ are needed. This fact provides an easy way of representing CW-complexes for the (co)homology computations with a computer by using a pointer-based data structure. Once the theory is provided, any of the existing libraries like~\cite{capd} or~\cite{chomp} used to compute homology of, for instance, cubical sets can be adopted for cohomology computation of an arbitrary regular CW-complex.

%For a chain $c \in C_n(\mathcal{K},G)$ and a cochain $d^{*} \in C^n(\mathcal{K},G)$, where $G$ is a group, by $\langle d^{*} , c \rangle$ we denote the dot-product of $d^{*}$ and $c$, see~\cite{tc}.
%In the computations of cohomology group generators we strongly use a consequence of Riesz representation theorem which allows to represent cochains as chains. It is clear that the set of maps $\{ S^* | \langle S^*,T \rangle = \delta_{ST} \}_{S \in \mathcal{K}_n}$ is a basis of $C^n(\mathcal{K})$. Then $c^* \in C^n(\mathcal{K})$ can be represented by a chain $c = \sum_{S \in \mathcal{K}_n} \alpha_S S$, where $\alpha_S = \langle c^* , S \rangle$. In this setting $\langle \cdot, \cdot \rangle$ is simply a scalar (dot) product. For further details consult~\cite{cmes}.

Let us now introduce the concept of \textit{exact sequences}.
Let $A_1,\ldots,A_{m+1}$ be abelian groups and let $\alpha_i : A_i \rightarrow A_{i+1}$ for $i \in \{1,\ldots,m\}$ be homomorphisms between them. The sequence
$$A_1 \xrightarrow{\alpha_1} A_2 \xrightarrow{\alpha_2} \ldots \xrightarrow{\alpha_{m-1}} A_{m} \xrightarrow{\alpha_m} A_{m+1}$$
is called an exact sequence if $\mathrm{Im}(\alpha_{i}) = \mathrm{Ker}(\alpha_{i+1})$ for every $i \in \{1,\ldots,m-1\}$. For $m=4$ and $A_1 = A_4 = 0$ the exact sequence
$$0 \rightarrow A_2 \xrightarrow{\alpha_2} A_{3} \xrightarrow{\alpha_3} A_{4} \rightarrow 0$$
is referred to as \textit{short exact sequence}.
The so-called exact sequence of the reduced homology provides us a tool to relate the homology group of the space $X$, its subspace $A$ and the relative homology of the pair $(X,A)$.
\begin{theorem}[see Theorem 2.16, \cite{Hatcher}]
\label{th:relHomologyExactSequence}
If $X$ is a regular CW-complex and $A \subset X$ is a sub-complex of $X$, then there is a long exact sequence
$$\ldots \xrightarrow{\partial} H_n(A) \xrightarrow{i_{*}} H_n(X) \xrightarrow{j_{*}} H_n(X,A) \xrightarrow{\partial} H_{n-1}(A) \xrightarrow{i_{*}} \ldots.$$
\end{theorem}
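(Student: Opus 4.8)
The plan is to deduce this long exact sequence from the purely homological-algebraic fact that a short exact sequence of chain complexes induces a long exact sequence in homology; the CW-structure enters only to guarantee that the relevant chain groups and boundary maps are well defined, and the embedding in $\mathbb{R}^3$ plays no role at all.

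First I would construct the short exact sequence of chain complexes. Since $A \subset X$ is a subcomplex, every cell of $A$ is a cell of $X$, so the inclusion induces an injective chain map $i : C_n(A,G) \hookrightarrow C_n(X,G)$ whose image is exactly the subgroup generated by the cells of $A$. By the definition of relative chains recalled above, $C_n(X,A,G) = C_n(X,G)/C_n(A,G)$, and the quotient projection $j : C_n(X,G) \to C_n(X,A,G)$ is surjective with kernel $\mathrm{Im}(i)$. Because a subcomplex is closed under taking faces, the incidence index $\kappa$ of a cell of $A$ involves only cells of $A$, so the boundary operator $\partial$ restricts to $C_\bullet(A,G)$ and descends to $C_\bullet(X,A,G)$; hence both $i$ and $j$ commute with $\partial$. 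This produces, for each $n$, a short exact sequence
$$0 \to C_n(A,G) \xrightarrow{i} C_n(X,G) \xrightarrow{j} C_n(X,A,G) \to 0$$
compatible with the boundary maps, that is, a short exact sequence of chain complexes.

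Next I would construct the connecting homomorphism $\partial : H_n(X,A,G) \to H_{n-1}(A,G)$ by the standard zig-zag construction. Given a relative homology class represented by a relative cycle $\bar c \in C_n(X,A,G)$, I would lift it along the surjection $j$ to a chain $c \in C_n(X,G)$. Since $\bar c$ is a relative cycle, $j(\partial c)=\partial(jc)=0$, so $\partial c \in \mathrm{Ker}(j)=\mathrm{Im}(i)$; thus $\partial c = i(a)$ for a unique $a \in C_{n-1}(A,G)$. As $i(\partial a)=\partial(ia)=\partial\partial c=0$ and $i$ is injective, $a$ is a cycle in $A$, and I would set $\partial[\bar c]=[a]$. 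The delicate part is well-definedness: independence of the lift $c$ and of the representative $\bar c$ follows from a short diagram chase showing that different choices alter $a$ only by a boundary in $C_{n-1}(A,G)$.

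Finally I would establish exactness at each of the three node types $H_n(A)$, $H_n(X)$, $H_n(X,A)$ by diagram chasing, verifying in each case the two inclusions $\mathrm{Im} \subset \mathrm{Ker}$ and $\mathrm{Ker} \subset \mathrm{Im}$. The first inclusions are immediate from the facts that $i,j$ are chain maps and from the defining relation $\mathrm{Im}(i)=\mathrm{Ker}(j)$; the second inclusions require lifting a given cycle through $i$ or $j$ and correcting it by a suitable boundary. I expect the main obstacle to be the careful well-definedness argument for $\partial$ together with the bookkeeping in these chases, rather than any conceptual difficulty: once the short exact sequence of chain complexes is in place, the entire conclusion is formal.
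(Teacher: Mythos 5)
Your proposal is correct and is exactly the standard argument: the paper itself offers no proof of this statement, merely citing Hatcher's Theorem 2.16, and Hatcher's proof is precisely the one you outline --- the short exact sequence of chain complexes $0 \to C_\bullet(A) \to C_\bullet(X) \to C_\bullet(X,A) \to 0$ followed by the zig-zag construction of the connecting homomorphism and the usual diagram chases. Your observations that the CW structure serves only to make the chain groups and incidence indices well defined, and that the embedding in $\mathbb{R}^3$ is irrelevant here, are both accurate.
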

The map $\partial : H_n(X,A) \rightarrow H_{n-1}(A)$ maps a class $[\alpha] \in H_n(X,A)$ to a class $[\partial \alpha] \in H_{n-1}(A)$.
It is straightforward that, when $H_n(X)$ is trivial, from the exactness of the sequence, $\partial : H_n(X,A) \rightarrow H_{n-1}(A)$ is an isomorphism.

To state, in Section~\ref{sec:TowardADefinitionOCuts}, the dualities between first homology and first cohomology groups of subsets of $\mathbb{R}^3$ the following theorems are required:
\begin{theorem}[\cite{Hatcher}, Proposition A.4, Corollary 3.44]
\label{th:homologyAreTorsionFree} If $X \subset \mathbb{R}^n$ is a finite CW-complex, then $H_i(X,\mathbb{Z})$ is $0$ for $i \geq n$ and torsion free for $i=n-1$ and $i=n-2$.
\end{theorem}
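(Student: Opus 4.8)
The plan is to read off the relevant integral cohomology of $X$ from \emph{Alexander duality}, and then transport the conclusions from cohomology to homology by means of the Universal Coefficient Theorem. First I would check that $X$ meets the hypotheses of Alexander duality: embedding $\mathbb{R}^n$ into its one-point compactification $S^n = \mathbb{R}^n \cup \{\infty\}$ realizes $X$ as a subspace of $S^n$ that is nonempty, compact (because it is a \emph{finite} CW-complex), and proper (it misses the point $\infty$); moreover, by Proposition~A.4 of \cite{Hatcher} every CW-complex is locally contractible. With these hypotheses in force, Alexander duality (\cite{Hatcher}, Corollary~3.44) applies to $X \subset S^n$ and, after reindexing, gives for every $m$ the isomorphism $\tilde{H}^m(X;\mathbb{Z}) \cong \tilde{H}_{n-m-1}(S^n \setminus X;\mathbb{Z})$.

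From this single formula I would extract the two pieces of cohomological information needed. For $m \geq n$ the homological degree $n-m-1$ is negative, and reduced homology vanishes in negative degrees, so $H^m(X;\mathbb{Z}) = \tilde{H}^m(X;\mathbb{Z}) = 0$ (recall $m \geq n \geq 1$). For $m = n-1$ the right-hand side is $\tilde{H}_0(S^n \setminus X;\mathbb{Z})$, and the reduced zeroth homology of any nonempty space is free abelian, so $H^{n-1}(X;\mathbb{Z})$ is a free abelian group.

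The final step passes to homology. Since $X$ is a finite CW-complex, all its homology groups are finitely generated, so the Universal Coefficient Theorem presents $H^m(X;\mathbb{Z})$ as the split extension with $\mathrm{Ext}(H_{m-1}(X;\mathbb{Z}),\mathbb{Z})$ as a subgroup and $\mathrm{Hom}(H_m(X;\mathbb{Z}),\mathbb{Z})$ as the quotient; here $\mathrm{Ext}(A,\mathbb{Z})$ is isomorphic to the torsion subgroup $T(A)$ and $\mathrm{Hom}(A,\mathbb{Z})$ to the free part of a finitely generated abelian group $A$. Taking $m=n$ shows $T(H_{n-1}(X;\mathbb{Z})) \cong \mathrm{Ext}(H_{n-1},\mathbb{Z}) \hookrightarrow H^n(X;\mathbb{Z}) = 0$, so $H_{n-1}(X;\mathbb{Z})$ is torsion-free; taking $m = n-1$ embeds $T(H_{n-2}(X;\mathbb{Z}))$ into the free group $H^{n-1}(X;\mathbb{Z})$, forcing it to vanish as well, so $H_{n-2}(X;\mathbb{Z})$ is torsion-free. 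Finally, for $i \geq n$ the vanishing of both $H^i$ and $H^{i+1}$ kills simultaneously the free part $\mathrm{Hom}(H_i,\mathbb{Z})$ and the torsion part $\mathrm{Ext}(H_i,\mathbb{Z}) \cong T(H_i)$ of $H_i(X;\mathbb{Z})$, whence $H_i(X;\mathbb{Z}) = 0$.

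I expect no deep obstacle, only two points demanding care. The first is verifying the geometric hypotheses of Alexander duality---in particular local contractibility, which is precisely the role of Proposition~A.4 and which is what lets singular and \v{C}ech theories agree so that the duality isomorphism is available---together with the harmless reduced-versus-unreduced bookkeeping in the degrees at issue. The second is the algebra of the Universal Coefficient Theorem: one must keep straight that the torsion of $H_{m-1}$ (not of $H_m$) is what appears inside $H^m$, and invoke finite generation so that $\mathrm{Ext}(-,\mathbb{Z})$ and $\mathrm{Hom}(-,\mathbb{Z})$ really compute the torsion subgroup and the free rank.
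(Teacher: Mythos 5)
Your argument is correct and is precisely the route the paper intends: it gives no proof of its own but cites exactly the two ingredients you use, namely local contractibility of CW-complexes (Hatcher, Proposition~A.4) feeding into Alexander duality in $S^n$ (Hatcher, Corollary~3.44), with the Universal Coefficient Theorem transporting the vanishing/freeness of $H^n$ and $H^{n-1}$ back to the torsion of $H_{n-1}$ and $H_{n-2}$ and the vanishing of $H_i$ for $i\geq n$. The reduced-homology bookkeeping and the finite-generation hypothesis for $\mathrm{Ext}$ and $\mathrm{Hom}$ are handled correctly, so there is nothing to add.
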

For $n=3$ the Theorem~\ref{th:homologyAreTorsionFree} states that the first and the second homology group of a finite CW-complexes embeddable in $\mathbb{R}^3$ are torsion free.

For CW-complexes the following---stronger from standard---version of excision theorem holds:
\begin{theorem}[Corollary 2.24, \cite{Hatcher}]
\label{th:HatcherCorollary224}
If the CW-complex $X$ is the union of sub-complexes $A$ and $B$, then the inclusion $(B,A \cap B)\hookrightarrow (X,A)$ induces an isomorphism $H_n(B,A \cap B) \rightarrow H_n(X,A)$ for all $n$.
\end{theorem}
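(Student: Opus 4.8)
The plan is to compute both sides using the cellular chain complex, which is the natural tool here since the homology in this paper is built directly from the cells and their incidence indices $\kappa$. Recall that for a CW-pair the relative cellular chain group $C_n(X,A)$ is the free $G$-module on the $n$-cells of $X$ that do not lie in $A$, equipped with the boundary operator induced by the incidence indices together with the projection that annihilates the cells contained in $A$; the resulting homology coincides with the relative homology $H_n(X,A)$ appearing in the statement (see~\cite{cwHomology}, \cite{Hatcher}).

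The first and decisive step is a bookkeeping of cells. Since $X = A \cup B$ is a union of subcomplexes, every cell of $X$ lies in $A$ or in $B$. Consequently an $n$-cell of $X$ that is \emph{not} in $A$ must be a cell of $B$, and such a cell lies in $A \cap B$ precisely when it lies in $A$, i.e.\ never. Hence for every $n$ the inclusion $(B, A \cap B) \hookrightarrow (X,A)$ induces a bijection between the $n$-cells of $B$ not in $A \cap B$ and the $n$-cells of $X$ not in $A$. This bijection extends $G$-linearly to an isomorphism of modules $C_n(B, A \cap B) \to C_n(X,A)$.

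Next I would verify that these module isomorphisms assemble into an isomorphism of chain complexes, i.e.\ that they commute with the relative boundary operators. This reduces to the locality of the incidence index: for an $n$-cell $e \subset B \setminus A$, the incidence number $\kappa(f,e)$ with an $(n-1)$-cell $f$ is intrinsic to the attaching map of $e$ and is unaffected by whether $e$ is regarded inside $B$ or inside $X$. Moreover, every face $f$ of $e$ lies in $B$ (because $e \subset B$ and $B$ is a subcomplex), so the faces discarded by the relative projection agree on the two sides: the condition $f \in A \cap B$ on the $B$-side corresponds exactly to $f \in A$ on the $X$-side. Thus the two relative boundary operators coincide under the cell bijection, and the induced map is an isomorphism of chain complexes.

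Finally, an isomorphism of chain complexes induces isomorphisms on homology in every degree, yielding $H_n(B, A \cap B) \cong H_n(X,A)$ for all $n$, and by construction this is the map induced by the inclusion, as claimed. I expect the only genuinely delicate point to be the agreement of the relative boundary maps; once the locality of $\kappa$ and the stability of faces under passage from $B$ to $X$ are made precise, the remainder is formal. An alternative route would thicken $A$ to an open neighborhood and invoke the classical excision theorem with excised set contained in the interior of $A$, but this requires the neighborhood-deformation-retract property of CW-pairs and is considerably less transparent than the direct cellular computation.
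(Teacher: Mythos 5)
Your proof is correct, and it takes a genuinely different route from the one the paper relies on: the paper gives no argument at all, importing the statement verbatim as Corollary 2.24 of Hatcher, whose proof runs through the general excision theorem for singular homology together with the good-pair property of CW pairs (one passes to quotients and uses the homeomorphism $B/(A\cap B)\cong X/A$). Your route is a direct cellular computation, and in the setting of this paper it is the more natural one: here the chain groups are defined from the outset as free $G$-modules on the cells with boundary given by the incidence indices $\kappa$, so the relative complex $C_\bullet(X,A)$ is free on the cells of $X$ not in $A$, your cell bookkeeping (a cell of $X$ outside $A$ is exactly a cell of $B$ outside $A\cap B$) is an honest bijection, and your observation that every face of a cell of $B$ remains in $B$ — so that "discarded in $A$" and "discarded in $A\cap B$" select the same faces — makes the two relative boundary operators literally coincide, giving an isomorphism of chain complexes rather than merely of homology. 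The one caveat worth recording is that this proves the statement for cellular homology; to recover Hatcher's corollary in its original form (for singular homology) one still needs the standard identification of cellular with singular homology of CW pairs, which is where the topological input (neighborhood deformation retracts, classical excision) re-enters — essentially the alternative route you sketch and set aside. Within the paper's purely combinatorial framework that identification is taken as given, so your argument is complete and arguably more transparent than the cited one.
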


\subsection{Dual chain complex}
\label{sec:ATinCE}
A polyhedral mesh is used to model the domain of interest of the electromagnetic problem. Let us fix the polyhedral mesh $\mathcal{K} = \{K_n\}_{n \in \mathbb{N}}$. Let us now define the dual mesh $\mathcal{B}$. The construction is a straightforward extension of the construction of the dual mesh for a simplicial complex explained in Figure~\ref{fig:barysubd}, see \cite{Munkres}.
\begin{figure} [!h]
\centering
\includegraphics[width=12cm]{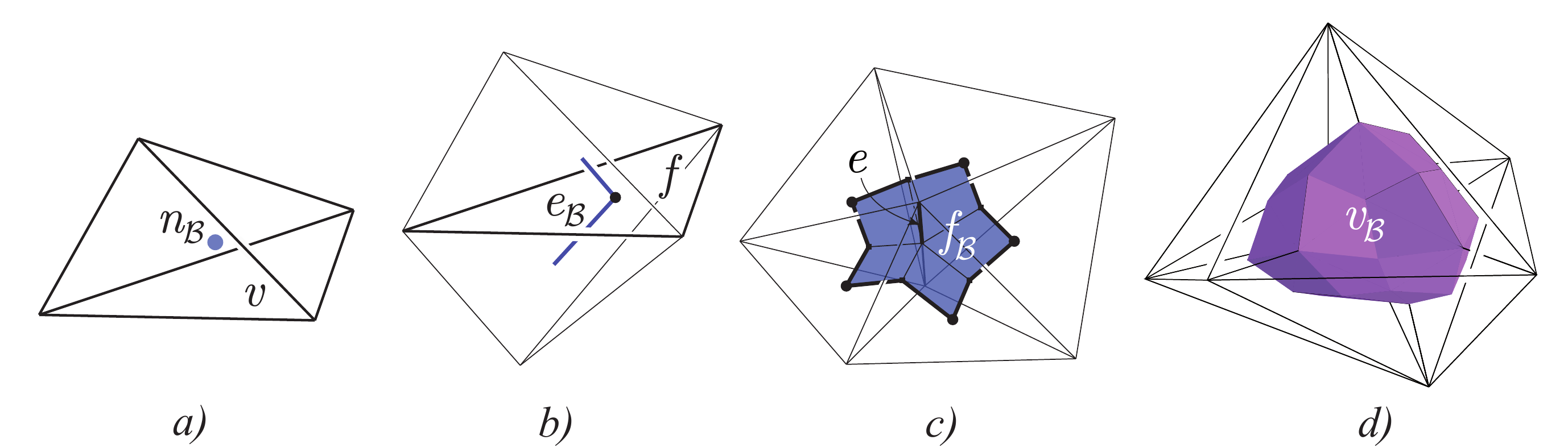}
\caption{a) A cell $v$ of a simplicial complex and its one-to-one node of the barycentric complex $n_\mathcal{B}$; b) A face $f$ of the simplicial complex and its one-to-one edge of the barycentric complex $e_\mathcal{B}$; c) An edge $e$ of the simplicial complex and its one-to-one face of the barycentric complex $f_\mathcal{B}$; d) One-to-one correspondence between a node $n$ of the simplicial complex and the volume of the barycentric complex $v_\mathcal{B}$.} \label{fig:barysubd}
\end{figure}

Let $n_0 = dim\ \mathcal{K}$ be the dimension of the complex $\mathcal{K}$.
For every cell $c \in K_i$ for $i \in \{0,\ldots,n_0\}$ by $\tilde{c} \in B_{n_0 - i}$ let us denote the corresponding element in dual mesh $\mathcal{B}$.
For every $c \in K_{n_0}$, the corresponding element $\tilde{c} \in \mathcal{B}_{0}$ is simply the barycenter of $c$. The remaining cells of $\mathcal{B}$ are defined recursively in the following way. For $c \in K_i$ for $i \in \{0,\ldots,n_0-1\}$, let $\{c_1,\ldots,c_n\} = |\delta c|$ and let $B(c)$ denotes the barycenter of $c$. Then $\tilde{c} = \bigcup_{i=1}^n \bigcup_{x \in \tilde{c}_i} \{[x,B(c)]\}$, where $[x,y]$ denotes the line segment joining $x$ and $y$. In the paper, we denote by $\tilde{\partial}$ and $\tilde{\delta}$ the boundary and coboundary operator in the dual complex $\mathcal{B}$.
%Moreover for the dual complex the following equation holds for every $c,d \in \mathcal{K}$
To let the chain complex of $\mathcal{B}$ be dual---in purely algebraic sense---to the chain complex of $\mathcal{K}$, the boundary operator is defined as follows:
\[ \langle \partial c , d \rangle_{\mathcal{K}} = \langle \tilde{\partial} \tilde{d} , \tilde{c} \rangle_{\mathcal{B}}\ \  \forall c \in C_i(\mathcal{K}), d \in C_{i-1}(\mathcal{K}) \text{ for } i \in \mathbb{N}.\]
We would like to point out that the presented construction is valid only in the case of manifolds without boundary. In case of the presence of a boundary, first the complex dual to the boundary is constructed. Then, the two dual complexes, namely the one described above and the complex dual to the boundary, are merged in an obvious way. Nonetheless, the construction of the dual complex is fundamental only to develop the formulation while, for the computations, the explicit construction of the dual complex can be avoided.
The historical context of the idea of barycentric dual complex is mentioned in Section~\ref{sec:cutsOnDualCmplx}.

\subsection{Physical variables as complex-valued cochains}
\label{sec:ComplexCochainsGeoEntities}
Let $\mathcal{K}$ be a homologically trivial polyhedral cell complex in $\mathbb{R}^3$.
Let $\mathcal{K}_c$ be a closed sub-complex of $\mathcal{K}$ which models the conducting region. The $\mathcal{K} \setminus int\ \mathcal{K}_c$ is an insulating region the closure of which is meshed by $\mathcal{K}_a$, which is a closed sub-complex of $\mathcal{K}$. The interface of the conducting and insulating region is meshed by $\mathcal{K}_c \cap \mathcal{K}_a$. Moreover, it is assumed that $\mathcal{K}_c$ and $\mathcal{K}_a$ are non-empty. The dual sub-complexes are denoted by $\mathcal{B}_a$ and $\mathcal{B}_c$, respectively.

Since our aim is to solve the eddy-current problem in the frequency domain, the \textit{physical variables} are modeled in this paper as complex-valued cochains\footnote{A frequency domain eddy-current problem implies that all physical variables exhibit a time variation as isofrequential sinusoids. By using the standard \textit{symbolic method}, see \cite{steinmetz}, constant complex numbers called \textit{phasors} are used to represent the sinusoids. If the eddy-current problem has to be solved in time domain, the reals should be used in place of the complex numbers through the paper without any further modification.}. According to Tonti's classification of physical variables \cite{tonti_classificazione}, \cite{tonti_marsiglia}, \cite{tonti}, there is a unique association between a physical variable, such as electric current or magnetic flux \cite{maxwell}, \cite{tonti_classificazione}, \cite{tonti_marsiglia}, and an oriented geometric element of the two cell complexes $\mathcal{K}$ and $\mathcal{B}$. The cochain values, usually called Degrees of Freedom (DoFs) in computational physics (see for example \cite{bossa_libro}), have a direct physical interpretation: By using the so-called \textit{de Rham map} \cite{dodziuk}, they are defined as integrals of the electromagnetic differential forms over the elements of the complex\footnote{For example, the magneto-motive force (m.m.f.) DoF relative to the $1$-dimensional cell $e$ is the integral of the differential $1$-form \textit{magnetic field} over $e$.}.

\begin{figure} [!h]
\centering \includegraphics[width=12cm]{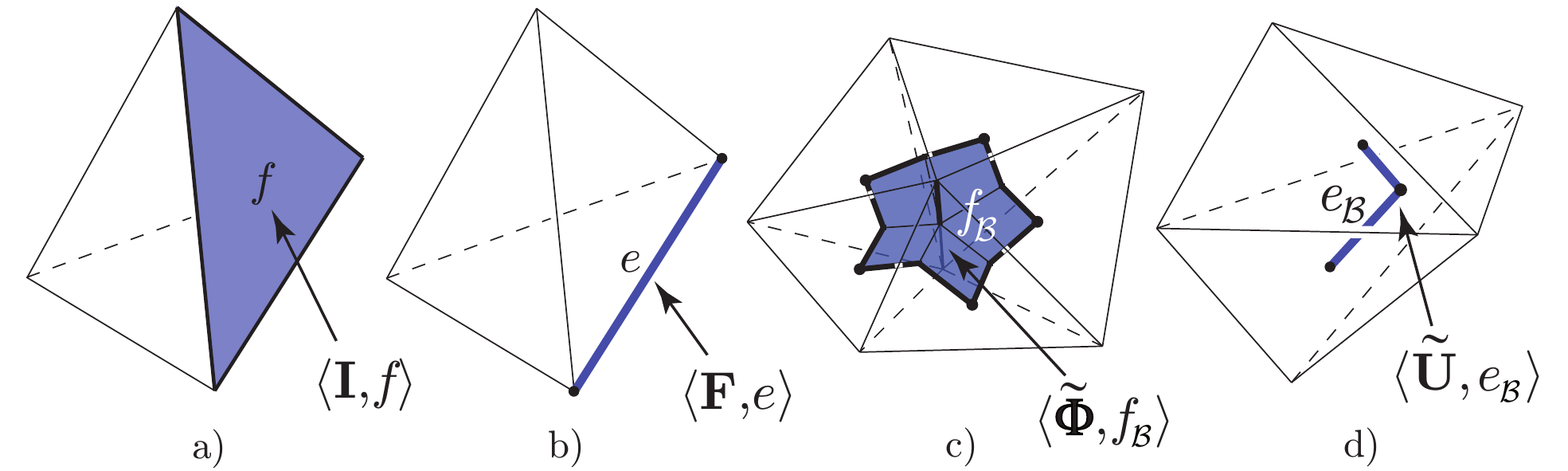}
\caption{Association of the degrees of freedom to the oriented geometrical entities.} \label{fig:association}
\end{figure}

The focus of this paper is on $h$-oriented formulations \cite{sst}, \cite{cmame}, so the following physical variables and related association with geometrical elements of $\mathcal{K}$ or $\mathcal{B}$ are considered:
\begin{itemize}
  \item $\langle \mathbf{I}, f \rangle$ is the \textit{electric current} associated to the face $f \in \mathcal{K}$, see Fig. \ref{fig:association}a. $\langle \mathbf{I}, f \rangle=0$ over the faces $f \in \mathcal{K}_a$ (with this definition, the current associated with the faces $f \in \mathcal{K}_a \cap \mathcal{K}_c$ is set to zero, since there is the need of a boundary condition that prevents the current to flow thought the boundary of the conductive region);
  \item $\langle \mathbf{F}, e \rangle$ is the \textit{magneto-motive force} (m.m.f.) associated to the edge $e \in \mathcal{K}$, see Fig. \ref{fig:association}b;
  \item $\langle \tilde{\boldsymbol{\Phi}}, f_{\mathcal{B}} \rangle$ is the \textit{magnetic flux} associated to the dual face $f_{\mathcal{B}} \in \mathcal{B}$, see Fig. \ref{fig:association}c;
  \item $\langle \tilde{\mathbf{U}}, e_{\mathcal{B}} \rangle$ is the \textit{electro-motive force} (e.m.f.) associated to the dual edge $e_{\mathcal{B}} \in \mathcal{B}$, see Fig. \ref{fig:association}d.
\end{itemize}
All these complex values, one for each $1$- or $2$-dimensional cell in the corresponding complex, are the coefficients of the corresponding complex-valued cochains denoted in boldface type.
For a fixed chain and cochain basis, each cochain can be represented as a vector which is used in the computations. In the following, since no confusion can arise, the notations for (co)chains and vectors representing them will be the same.

\section{Maxwell's equations in algebraic form and potentials analysis}
\label{sec:MaxwellAlgForm}
In this Section, the algebraic Maxwell's laws \cite{branin}, \cite{tonti_classificazione}, \cite{weiland} are reviewed.

The discrete \textit{current continuity law} enforces the dot product of the currents associated with faces belonging to the boundary of a volume $v \in \mathcal{K}_c$, with $dim\ v= 3$, to be zero %, see Fig. \ref{fig:ml}a, to be zero
\begin{equation}\label{continuity}
\langle \mathbf{I}, \partial v \rangle=\langle \delta \mathbf{I}, v \rangle=0,\,\, \forall v \in \mathcal{K}_c.
\end{equation}

Focusing on the generic face $f$, %in Fig. \ref{fig:ml}b,
the discrete \textit{Amp\`{e}re's law} enforces the dot product of the m.m.f. $\mathbf{F}$ on the boundary of the face $f$ to match the current associated with $f$,
\begin{equation}\label{ampere}
\langle \mathbf{F}, \partial f \rangle =\langle \delta \mathbf{F}, f \rangle =\langle \mathbf{I}, f \rangle, \,\, \forall f \in \mathcal{K}.\\
\end{equation}
Since $\langle \mathbf{I}, f \rangle=0, \forall f \in \mathcal{K}_a$, $\mathbf{F}$ is a $1$-cocycle in $\mathcal{K}_a$ (however, it is not a cocycle in $\mathcal{K}$).

The discrete \textit{magnetic Gauss's law} enforces the dot product of the magnetic fluxes associated with the dual faces belonging to the boundary of a dual volume $v_{\mathcal{B}}$ to be zero
\begin{equation}\label{gauss}
\langle \tilde{\boldsymbol{\Phi}}, \tilde{\partial} v_{\mathcal{B}} \rangle=\langle \tilde{\delta} \tilde{\boldsymbol{\Phi}}, v_{\mathcal{B}} \rangle=0,\,\, \forall v_{\mathcal{B}} \in \mathcal{B}.
%(\mathbf{G}^T)_{v_{\mathcal{B}}} \boldsymbol{\Phi}=0, \forall v_{\mathcal{B}} \in \mathcal{K},
\end{equation}
%see Fig. \ref{fig:ml}c.

Focusing on a dual face $f_{\mathcal{B}}$, the discrete \textit{Faraday's law} enforces the dot product of the e.m.f. $\tilde{\mathbf{U}}$ on the boundary of $f_{\mathcal{B}}$ to match the opposite of the variation of the magnetic flux through the face $f_{\mathcal{B}}$.
%, see Fig. \ref{fig:ml}d.
Considering problems in frequency domain, this translates in
\begin{equation}\label{faraday}
\langle \tilde{\mathbf{U}}, \tilde{\partial} f_{\mathcal{B}} \rangle =\langle \tilde{\delta} \tilde{\mathbf{U}}, f_{\mathcal{B}} \rangle = \langle - i \omega \tilde{\boldsymbol{\Phi}}, f_{\mathcal{B}} \rangle, \,\, \forall f_{\mathcal{B}} \in \mathcal{B},
\end{equation}
where $\omega$ is the angular frequency of the sinusoids equal to $2 \pi$ times the considered frequency.

%\textbf{PD -- Shouldn't we write, that we mean only the local versions of those laws here? Otherwise someone may get confused, why we are still working on the laws, that has already been written down, and are easy...}

%\textbf{PD -- after having read the previous section someone may get the false impression, that that's all you need to impose the Maxwell's equations (so the local equations are enough). Maybe we can place here an extra section in which we deal with non local versions of those equations. We can say then, which are trivial which are not. Then, in the next section, when we have an assumption about trivial topology we will say, that all local implies global, and later, we will say, that in case of nontrivial topology we may have a problem, and to overcome this the cuts will be needed. What do you think? This would be also a good point to discuss a non local current continuity law, which is now ''skipped''.}

The expressions just discussed of the four algebraic laws are called `local'. There exist also the so-called `non-local' versions of each of them, which are obtained by considering the balance not on exactly one geometrical entity but on a chain. It is straightforward to see that if the region is homologically trivial each non-local law can be obtained by considering a linear combination of local laws. Therefore, in this case, the non local laws do not bring any new information. As will be discussed in the next Sections, this does not hold for homologically non-trivial regions.

\subsection{A preliminary definition of potentials}\label{spre}
In this Section a preliminary definition of potentials employed in the formulation is presented and analyzed by means of algebraic topology.
For this preliminary definition let us assume that the considered complex $\mathcal{K}_c$ is homologically trivial. In particular, we use the fact that, when the conductors and the whole domain are homologically trivial cell complexes then, from a standard result on exact sequences, one has that the homology of the insulating domain is also trivial. How to generalize the definition of potentials in case of homologically non-trivial regions is the subject of Section \ref{sect_inc}.

Let us first analyze the potentials employed in the insulating region $\mathcal{K}_a$. It has been already indicated that $\mathbf{F}$ is a $1$-cocycle in $\mathcal{K}_a$, hence $\langle \delta \mathbf{F},f\rangle=0$ holds $\forall f \in \mathcal{K}_a$. Thus, since $\mathcal{K}_a$ is homologically trivial, a \textit{magnetic scalar potential} $\boldsymbol{\Omega}$ $0$-cochain can be introduced in the insulating region such that
\begin{equation}\label{defomega}
\langle \delta \boldsymbol{\Omega},e\rangle  = \langle\mathbf{F},e\rangle, \forall e \in \mathcal{K}_a.
\end{equation}

Let us analyze now the potentials employed in the conducting region $\mathcal{K}_c$. From (\ref{continuity}), we know that $\mathbf{I}$ is a $2$-cocycle in $\mathcal{K}_c$, hence $\langle \delta \mathbf{I},v\rangle=0$ holds $\forall v \in \mathcal{K}_c$.
Thus, an \textit{electric vector potential} $\mathbf{T}$ $1$-cochain can be introduced in the conducting region such that
\begin{equation}\label{deft}
\langle \mathbf{T},\partial f\rangle =\langle \delta \mathbf{T},f\rangle= \langle\mathbf{I},f\rangle, \forall f \in \mathcal{K}_c.
\end{equation}
Thanks to (\ref{ampere}), the following holds
\begin{equation}\label{deft2}
\langle \delta \mathbf{T},f\rangle = \langle\mathbf{I},f\rangle=\langle \delta \mathbf{F},f\rangle, \forall f \in \mathcal{K}_c.
\end{equation}
Since $\mathbf{F}$ and $\mathbf{T}$ are $1$-cochains such that $\delta \mathbf{F} = \delta \mathbf{T}$, they differ by a $0$-coboundary of a $0$-cochain $\mathbf{O}$, i.e. $\mathbf{F} = \mathbf{T} + \delta \mathbf{O}$. Since it is required that the magnetic scalar potential is continuous inside $\mathcal{K}$, we can extend the support of $\boldsymbol{\Omega}$ also inside $\mathcal{K}_c$ in such a way that $\langle \boldsymbol{\Omega} , n \rangle = \langle \mathbf{O} , n \rangle$ for every node $n \in \mathcal{K}_c$. We want to remark, that those extensions are valid also in case of homologically non-trivial complexes $\mathcal{K}_c$ and $\mathcal{K}_a$ and they are used further in the paper.
%Old version:
%Since $\langle\mathbf{F},e\rangle$ and $\langle\mathbf{T},e\rangle$, $\forall e \in \mathcal{K}_c$, are $1$-cochains and $%\mathcal{K}_c$ is homologically trivial, they differ by the $0$-coboundary of a $0$-cochain $\mathbf{O}$. Since it is required that the magnetic scalar potential is continuous inside $\mathcal{K}$, we can extend the support of $\boldsymbol{\Omega}$ also inside $\mathcal{K}_c$ in such a way that $\langle \boldsymbol{\Omega} , n \rangle = \langle \mathbf{O} , n \rangle$ for every node $n \in \mathcal{K}_c$.
For brevity, let us define also the cochain $\mathbf{T}$ as cochain in $\mathcal{K}$. To this aim, we assume that $\langle \mathbf{T}, e \rangle = 0$ for every edge $e \in \mathcal{K}_a$.

To sum up, by using the potentials $\mathbf{T}$ and $\boldsymbol{\Omega}$, Amp\`{e}re's law (\ref{ampere}) and current continuity law (\ref{continuity}) can be enforced implicitly by considering the following expression for $\mathbf{F}$
\begin{equation}\label{potential2}
\mathbf{F}=\delta \boldsymbol{\Omega}+\mathbf{T}.
\end{equation}

Then, it is easy to show that employing this definition of potentials Amp\`{e}re's law holds in $\mathcal{K}_a$ (in fact, $\langle \delta \mathbf{F},f\rangle=\langle \delta \delta \boldsymbol{\Omega},f\rangle=\langle \mathbf{0},f\rangle=0=\langle \mathbf{I},f\rangle$, $\forall f \in \mathcal{K}_a$, since the current is zero in $\mathcal{K}_a$) and the current continuity law holds in all $\mathcal{K}$ (in fact, $\delta \mathbf{I}=\delta\delta \mathbf{T}=\mathbf{0}$). The remaining laws will be enforced by a system of equations in Section~\ref{sec:AlgebraicEquations}.

\noindent
\section{Potentials design}
\label{sec:TowardADefinitionOCuts}
\noindent
\subsection{Non-local Amp\`{e}re's law in homologically non-trivial domains}
\label{sect_inc}
Let us now remove the hypothesis of $\mathcal{K}_c$ being homologically trivial.
Amp\`{e}re's law can be written on a $1$-cycle $c \in Z_1(\mathcal{K}_a)$ and a $2$-chain $s \in C_2(\mathcal{K})$ such that $\partial s=c$:
\begin{equation}\label{nl_ampere}
\langle \mathbf{F},c \rangle=\langle \mathbf{I},s \rangle.
\end{equation}
Equation (\ref{nl_ampere}) is an example of a non-local equation, since the algebraic constraint is not enforced on a geometric element, like (\ref{ampere}), but involves geometric elements belonging to a wider collection, in this case the support of $s$ and its boundary $c$. In (\ref{nl_ampere}) we have not specified which $2$-chain $s$ has to be used for taking the dot product at the right-hand side (in fact, $s$ has been determined only up to its boundary $\partial s = c$). To solve this issue, we show with next Theorem that the value $\langle \mathbf{I} , s \rangle$ depends only on $\partial s$ and, therefore, (\ref{nl_ampere}) is well defined.
%\textbf{PD - suggest to make this more formal:}\\
%\textbf{OLD:}\\
%\begin{theorem}\label{thlcOld}
%$\langle \mathbf{I},s \rangle$ depends only on the $1$-cycle $c$ and not on the particular $2$-chain $s$, with $\partial s = c$, used to evaluate the current.
%\end{theorem}
%\begin{proof}
%Let $s_1$ and $s_2$ be two $2$-chains such that $\partial s_1 = \partial s_2 = c$. Consequently $s_1 - s_2 \in Z_2(\mathcal{K})$. Since $\mathcal{K}$ is homologically trivial, there exists $b \in C_3(\mathcal{K})$ such that $\partial b = s_1-s_2$. Consequently $s_1=s_2+\partial b$ holds. Then, $\langle \mathbf{I},s_1 \rangle=\langle \mathbf{I},s_2 \rangle + \langle \mathbf{I},\partial b \rangle = \langle \mathbf{I},s_2 \rangle + \langle \delta \mathbf{I}, b \rangle=\langle \mathbf{I},s_2 \rangle$, since due to (\ref{continuity}), $\delta \mathbf{I}=0$.
%\end{proof}
%\\ \textbf{NEW:}\\
\begin{theorem}\label{thlc}
Let $s_1$ and $s_2$ be two $2$-chains such that $\partial s_1 = \partial s_2 = c$, where $c \in Z_1(\mathcal{K}_a)$. Then $\langle \mathbf{I} , s_1 \rangle = \langle \mathbf{I} , s_2 \rangle$.
\end{theorem}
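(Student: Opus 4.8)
The plan is to reduce everything to the homological triviality of $\mathcal{K}$ together with the fact that $\mathbf{I}$ is a cocycle on the whole complex, not merely on $\mathcal{K}_c$. First I would set $s = s_1 - s_2$ and note, by linearity of the scalar product, that the claim $\langle \mathbf{I}, s_1 \rangle = \langle \mathbf{I}, s_2 \rangle$ is equivalent to $\langle \mathbf{I}, s \rangle = 0$. Since $\partial s_1 = \partial s_2 = c$, we have $\partial s = 0$, so $s$ is a $2$-cycle, $s \in Z_2(\mathcal{K})$. (The hypothesis $c \in Z_1(\mathcal{K}_a)$ is not really used beyond guaranteeing that both chains $s_i$ share the same boundary; the argument needs only $\partial s_1 = \partial s_2$.)

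Next I would invoke homological triviality. Because $\mathcal{K}$ is a homologically trivial complex in $\mathbb{R}^3$, its second homology group vanishes, $H_2(\mathcal{K}) = 0$, so every $2$-cycle is a $2$-boundary. Hence there exists a $3$-chain $w \in C_3(\mathcal{K})$ with $s = \partial w$. Using the boundary--coboundary duality recalled earlier, $\langle \delta c^*, d \rangle = \langle c^*, \partial d \rangle$, I then rewrite
\[ \langle \mathbf{I}, s \rangle = \langle \mathbf{I}, \partial w \rangle = \langle \delta \mathbf{I}, w \rangle. \]
It therefore suffices to prove that $\delta \mathbf{I}$ annihilates every $3$-cell in the support of $w$, i.e.\ that $\mathbf{I}$ is a $2$-cocycle on all of $\mathcal{K}$.

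The key step is to check $\langle \delta \mathbf{I}, v \rangle = 0$ for an arbitrary $3$-cell $v \in \mathcal{K}$, splitting into two cases. If $v \in \mathcal{K}_c$, this is exactly the discrete current continuity law~(\ref{continuity}). If instead $v \in \mathcal{K}_a$, then, since $\mathcal{K}_a$ is a closed sub-complex, every face $f$ of $v$ lies in $\mathcal{K}_a$, and by the definition of $\mathbf{I}$ the current vanishes on such faces; hence $\langle \delta \mathbf{I}, v \rangle = \langle \mathbf{I}, \partial v \rangle = 0$. Extending linearly over the support of $w$ gives $\langle \delta \mathbf{I}, w \rangle = 0$, and combining with the display above yields $\langle \mathbf{I}, s \rangle = 0$, as desired.

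I expect the only genuinely delicate point to be this second case: one must notice that current continuity as stated guarantees the cocycle condition only on $\mathcal{K}_c$, and that the complementary vanishing on $\mathcal{K}_a$ comes not from a physical balance law but from the boundary condition $\langle \mathbf{I}, f \rangle = 0$ imposed on all insulating faces. Everything else is a routine application of $H_2(\mathcal{K}) = 0$ and the coboundary--boundary adjunction.
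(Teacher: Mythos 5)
Your proof is correct and follows essentially the same route as the paper: $s_1-s_2$ is a $2$-cycle, homological triviality of $\mathcal{K}$ provides a filling $3$-chain, and the boundary--coboundary adjunction reduces everything to $\delta\mathbf{I}=\mathbf{0}$. You are in fact slightly more careful than the paper, which justifies $\delta\mathbf{I}=\mathbf{0}$ by citing~(\ref{continuity}) alone even though that law is stated only for volumes of $\mathcal{K}_c$; your second case, handling volumes of $\mathcal{K}_a$ via the boundary condition $\langle\mathbf{I},f\rangle=0$ on insulating faces, supplies the missing half of that claim.
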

\begin{proof}
From the assumptions, $s_1 - s_2 \in Z_2(\mathcal{K})$. Since $\mathcal{K}$ is homologically trivial, there exists $b \in C_3(\mathcal{K})$ such that $\partial b = s_1-s_2$. Consequently, $s_1=s_2+\partial b$ holds. Then, $\langle \mathbf{I},s_1 \rangle=\langle \mathbf{I},s_2 \rangle + \langle \mathbf{I},\partial b \rangle = \langle \mathbf{I},s_2 \rangle + \langle \delta \mathbf{I}, b \rangle=\langle \mathbf{I},s_2 \rangle$, since, due to (\ref{continuity}), $\delta \mathbf{I}=\mathbf{0}$.
\end{proof}
%\\ \textbf{END} \\
%\begin{definition}\label{lc}
%Due to Theorem \ref{thlc}, $I_c = \langle \mathbf{I},s \rangle$, $\forall s \,|\, c=\partial s$, is usually called current linked with the $1$-cycle $c$.
%\end{definition}
%\textbf{I suggest to reorganize this definition:}

Due to Theorem~\ref{thlc}, one can state the following definition:
\begin{definition}\label{lcNew}
The value $I_c = \langle \mathbf{I},s \rangle$, for an arbitrary $1$-cycle $c \in Z_1(\mathcal{K}_a)$ and a $2$-chain $s$ such that $c=\partial s$, is called current \textit{linked} by the $1$-cycle $c$.
\end{definition}

Let us now show that the current linked by a $1$-cycle $c \in Z_1(\mathcal{K}_a)$ is the same for all the cycles in the homology class of $c$.
\begin{theorem}\label{lc2}
The linked current $I_c = \langle \mathbf{I},s \rangle$ depends only on the $H_1(\mathcal{K}_a)$ class of the $1$-cycle $c=\partial s$, where $c \in Z_1(\mathcal{K}_a)$. Thanks to non-local Amp\`{e}re's law, the same holds for the dot product $\langle \mathbf{F},c \rangle$.
\end{theorem}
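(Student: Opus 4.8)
The plan is to reduce everything to the well-definedness already established in Theorem~\ref{thlc}, combined with the vanishing of the current cochain $\mathbf{I}$ on the insulating region. I would take two $1$-cycles $c_1, c_2 \in Z_1(\mathcal{K}_a)$ lying in the same $H_1(\mathcal{K}_a)$ class, so that $c_1 - c_2 = \partial d$ for some $d \in C_2(\mathcal{K}_a)$. Choosing $2$-chains $s_1, s_2 \in C_2(\mathcal{K})$ with $\partial s_1 = c_1$ and $\partial s_2 = c_2$, the linked currents are $I_{c_1} = \langle \mathbf{I}, s_1 \rangle$ and $I_{c_2} = \langle \mathbf{I}, s_2 \rangle$, and the goal is to prove $I_{c_1} = I_{c_2}$.

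The key observation is that $s_2 + d$ is again a $2$-chain in $\mathcal{K}$ whose boundary is $\partial s_2 + \partial d = c_2 + (c_1 - c_2) = c_1$. Hence $s_1$ and $s_2 + d$ are two fillings of the \emph{same} cycle $c_1 \in Z_1(\mathcal{K}_a)$, and Theorem~\ref{thlc} yields $\langle \mathbf{I}, s_1 \rangle = \langle \mathbf{I}, s_2 + d \rangle = \langle \mathbf{I}, s_2 \rangle + \langle \mathbf{I}, d \rangle$. Since the filling $d$ lies entirely in $\mathcal{K}_a$ and the current satisfies $\langle \mathbf{I}, f \rangle = 0$ for every face $f \in \mathcal{K}_a$, the term $\langle \mathbf{I}, d \rangle$ vanishes, leaving $I_{c_1} = I_{c_2}$.

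For the second assertion I would simply invoke the non-local Amp\`{e}re's law (\ref{nl_ampere}), which states $\langle \mathbf{F}, c \rangle = \langle \mathbf{I}, s \rangle = I_c$ for any $c \in Z_1(\mathcal{K}_a)$ and any $s$ with $\partial s = c$. As $I_c$ has just been shown to depend only on $[c] \in H_1(\mathcal{K}_a)$, the same is immediately true of $\langle \mathbf{F}, c \rangle$.

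There is no genuinely hard analytic step here; the argument is essentially bookkeeping. The one point demanding care is the justification that the $2$-chain $d$ realizing $c_1 - c_2 = \partial d$ can be taken inside $\mathcal{K}_a$ — this is precisely what it means for $c_1$ and $c_2$ to be homologous \emph{in} $\mathcal{K}_a$ — so that $\langle \mathbf{I}, d \rangle = 0$. Absent this containment the cancellation would fail, which is also the structural reason why the governing homology must be that of the insulating region $\mathcal{K}_a$ and not of the whole complex $\mathcal{K}$.
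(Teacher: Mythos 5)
Your proof is correct and matches the paper's argument in substance: both hinge on the single fact that two homologous cycles in $\mathcal{K}_a$ differ by the boundary of a $2$-chain supported entirely in $\mathcal{K}_a$, where $\langle \mathbf{I}, \cdot \rangle$ vanishes. The only cosmetic difference is the order of attack --- the paper proves the invariance of $\langle \mathbf{F},c\rangle$ first, via $\langle \mathbf{F},\partial s\rangle=\langle \delta\mathbf{F},s\rangle=\langle \mathbf{I},s\rangle=0$, whereas you prove the invariance of $I_c$ first by comparing the two fillings $s_1$ and $s_2+d$ through Theorem~\ref{thlc}; each half transfers to the other through the non-local Amp\`{e}re's law, so both versions are complete.
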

\begin{proof}
Let us take two cycles $c_1, c_2 \in \mathcal{K}_a$ in the same homology class $H_1(\mathcal{K}_a)$. It means that $c_1=c_2+\partial s$ holds, where $s \in C_2(\mathcal{K}_a)$.
Then, $\langle \mathbf{F},c_1 \rangle=\langle \mathbf{F},c_2 \rangle + \langle \mathbf{F},\partial s \rangle = \langle \mathbf{F},c_2 \rangle + \langle \delta \mathbf{F}, s \rangle=\langle \mathbf{F},c_2 \rangle + \langle \mathbf{I}, s \rangle$. From the definition of $\mathbf{I}$ we have that $\langle \mathbf{I}, f \rangle = 0$ for every $f \in \mathcal{K}_a$, therefore $\langle \mathbf{I}, s \rangle = 0$, what completes the proof.
%Then all follows from equation (4.1).
\end{proof}

The idea of the proof of Theorem~\ref{lc2} is presented in Fig.~\ref{fig:h1classes} for the complement of a solid double torus with respect to a cube which contains it (not represented in the picture for the sake of clarity). The supports of two cycles $c_1$ and $c_2$ in the same homology class are depicted in Fig. \ref{fig:h1classes}a. The supports of two $2$-chains $s_1$ and $s_2$ used to evaluate the linked currents $I_{c_1} = \langle \mathbf{I},s_1 \rangle$ and $I_{c_2} = \langle \mathbf{I},s_2 \rangle$ are depicted in Fig. \ref{fig:h1classes}b. Finally, in Fig. \ref{fig:h1classes}c it is possible to see the support of a $2$-chain $s$ such that $c_1=c_2+\partial s$, which has been used in the proof.
\begin{figure} [!h]
\centering
\includegraphics[width=13cm]{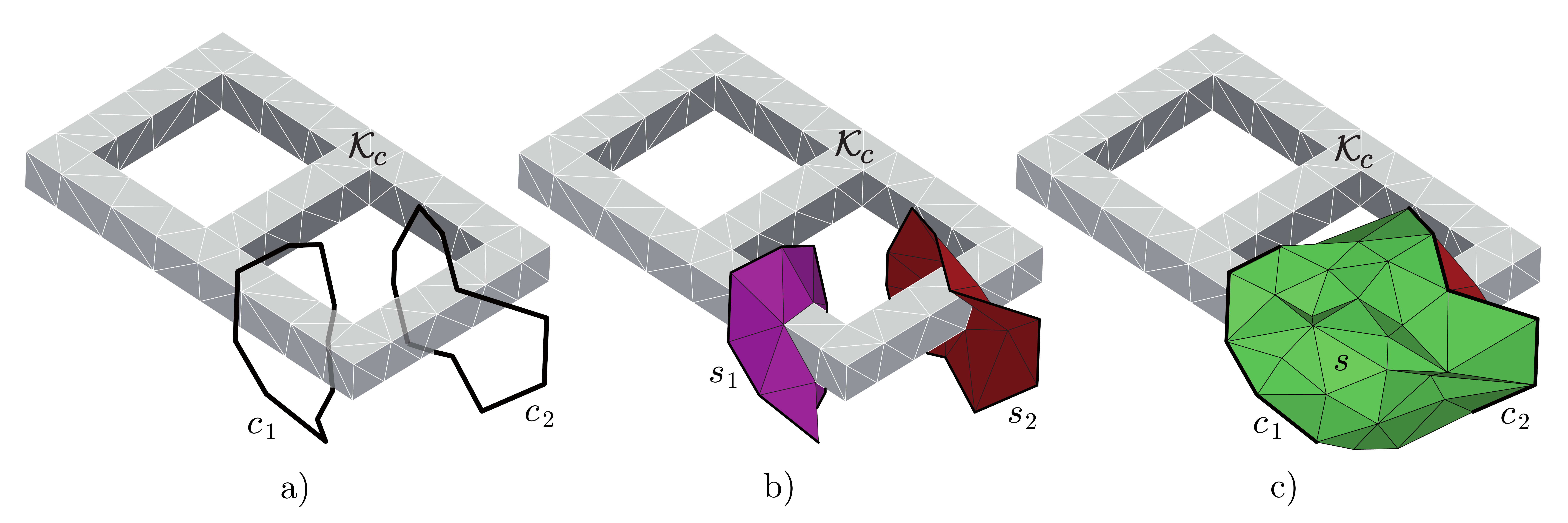}
\caption{a) Two cycles, $c_1$ and $c_2$, in the same $H_1(\mathcal{K}_a)$ class. b) Example of two possible $2$-chains used to evaluate the currents linked to $c_1$ and $c_2$. c) The $2$-chain $s$ whose boundary $1$-chain is $c_1-c_2$.} \label{fig:h1classes}
\end{figure}

Using the scalar potential in $\mathcal{K}_a$, as the definition (\ref{potential2}), yields to an inconsistency, since Amp\`{e}re's law may be violated on some $1$-cycle $c=\partial s \in Z_1(\mathcal{K}_a)$.
In fact, let us consider a $1$-cycle $c \in H_1(\mathcal{K}_a)$.
Using the double torus example previously introduced, this cycle may be for example the cycle $c_1$ or $c_2$ in Fig. \ref{fig:h1classes}a.
Now, due to (\ref{potential2}), we have
$$\langle \mathbf{F},c \rangle = \langle \mathbf{F} , \partial s \rangle = \langle \mathbf{T} + \delta \boldsymbol{\Omega},\partial s \rangle = \langle \mathbf{T}, \partial s \rangle + \langle \delta \boldsymbol{\Omega},\partial s \rangle=\langle \delta \delta \boldsymbol{\Omega},s \rangle = \langle \mathbf{0} , s \rangle =
0 \neq \langle \mathbf{I},s \rangle,$$
since $\langle \mathbf{T}, e \rangle = 0$ for every $e\in \mathcal{K}_a$.
The last inequality $0 \neq \langle \mathbf{I},s \rangle$ follows from the fact that the current flowing through $s$ is non-zero in general, since the support of $s$ has to intersect $\mathcal{K}_c$. According to next Theorem, the presented problem may occur only with cycles which are non-trivial in $H_1(\mathcal{K}_a)$.
%\\ \textbf{End:}\\
\begin{theorem}
Amp\`{e}re's law is satisfied for every $1$-boundary $b$ in $\mathcal{K}_a$. The cycles that produce an inconsistency in Amp\`{e}re's law---because may link a non-zero current---are the cycles which are non-trivial in the $1$-st homology group $H_1(\mathcal{K}_a)$.
\end{theorem}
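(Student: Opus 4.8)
The plan is to dispatch the two assertions separately, relying on the display that immediately precedes the statement together with Theorems~\ref{thlc} and~\ref{lc2}. The starting observation, which I would make explicit, is that the computation before the theorem holds for \emph{every} $1$-cycle $c=\partial s \in Z_1(\mathcal{K}_a)$, not merely for the non-trivial ones: the potential expression (\ref{potential2}) gives $\langle \mathbf{F},c\rangle = \langle \mathbf{T},\partial s\rangle + \langle \delta\delta\boldsymbol{\Omega},s\rangle = 0$, where $\langle \mathbf{T},\partial s\rangle = 0$ because $\partial s=c$ is supported on edges of $\mathcal{K}_a$ and $\mathbf{T}$ vanishes there. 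Hence, under the potential formulation, the left-hand side of non-local Amp\`{e}re's law (\ref{nl_ampere}) is identically zero, and the whole question reduces to deciding when the linked current $I_c$ vanishes.

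For the first assertion I would take a $1$-boundary $b$ in $\mathcal{K}_a$, i.e.\ $b=\partial\sigma$ for some $\sigma \in C_2(\mathcal{K}_a)$. By Theorem~\ref{thlc} the linked current $I_b=\langle \mathbf{I},s\rangle$ is independent of the $2$-chain $s$ chosen with $\partial s=b$, so I am free to take $s=\sigma$, whose support lies entirely inside $\mathcal{K}_a$. Since $\langle \mathbf{I},f\rangle = 0$ for every face $f\in\mathcal{K}_a$ by the definition of $\mathbf{I}$, linearity gives $I_b=\langle \mathbf{I},\sigma\rangle = 0$. Combined with $\langle \mathbf{F},b\rangle = 0$ from the preceding display, this yields $\langle \mathbf{F},b\rangle = 0 = I_b$, which is precisely non-local Amp\`{e}re's law on $b$.

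For the second assertion the key point is that the inconsistency $\langle \mathbf{F},c\rangle \neq I_c$ can arise only when $I_c \neq 0$, because the left-hand side is always $0$. By Theorem~\ref{lc2} the value $I_c$ depends only on the class $[c]\in H_1(\mathcal{K}_a)$, and the first part shows $I_c=0$ whenever $[c]=0$. Therefore a cycle can link a non-zero current, and so violate Amp\`{e}re's law under the potential formulation, only if it is non-trivial in $H_1(\mathcal{K}_a)$, which is the claim.

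I do not anticipate a real obstacle: both halves are immediate consequences of results already established. The only point demanding a little care is the choice of representative $2$-chain in the first part—I must invoke Theorem~\ref{thlc} to replace an arbitrary $s$ by one supported in $\mathcal{K}_a$, since only then does the vanishing of $\mathbf{I}$ on $\mathcal{K}_a$-faces apply. I would also stress that the theorem asserts non-triviality as a \emph{necessary} condition for inconsistency rather than a sufficient one: a homologically non-trivial cycle may still link zero current for a particular source distribution $\mathbf{I}$, which is exactly why the wording is ``because may link a non-zero current''.
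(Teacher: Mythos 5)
Your proof is correct and follows essentially the same route as the paper's: a $1$-boundary can be spanned by a $2$-chain lying in $\mathcal{K}_a$, where $\mathbf{I}$ vanishes, so both sides of non-local Amp\`{e}re's law are zero, while only homologically non-trivial cycles force the spanning chain into $\mathcal{K}_c$ and can therefore link a non-zero current. Your explicit appeals to Theorems~\ref{thlc} and~\ref{lc2} and your remark that non-triviality is only a \emph{necessary} condition for inconsistency are minor (and welcome) sharpenings of the paper's terser argument, not a different approach.
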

\begin{proof}
In the first case, the cycle $c \in B_1(\mathcal{K}_a)$ is \textit{bounding}, which means that there exists $s \in C_2(\mathcal{K}_a)$ such that $\partial s = c$.
Since $s$ does not intersect $\mathcal{K}_c$, the dot product of the current $\mathbf{I}$ and $s$ is zero and the dot product of the m.m.f. $\mathbf{F}$ and $c$ is also zero.

In the second case, $c$ being non-zero in the first homology group $H_1(\mathcal{K}_a)$, such a chain $s \in C_2(\mathcal{K}_a)$ does not exist. But, since the whole complex $\mathcal{K}$ is homologically trivial, there exist a $2$-chain $s' \in C_2(\mathcal{K})$ such that $\partial s'=c$. Consequently one needs to have $|s'| \cap \mathcal{K}_c \not = \emptyset$. Thus, the support of $s'$ has to extend in the current-carrying region $\mathcal{K}_c$ and, consequently, the current trough $s'$ (and linked by $c$) is non-zero in general. This causes in general an inconsistency in Amp\`{e}re's law for the cycles $c$ non-trivial in $H_1(\mathcal{K}_a)$.
\end{proof}

\begin{figure} [!h]
\centering
\includegraphics[width=11cm]{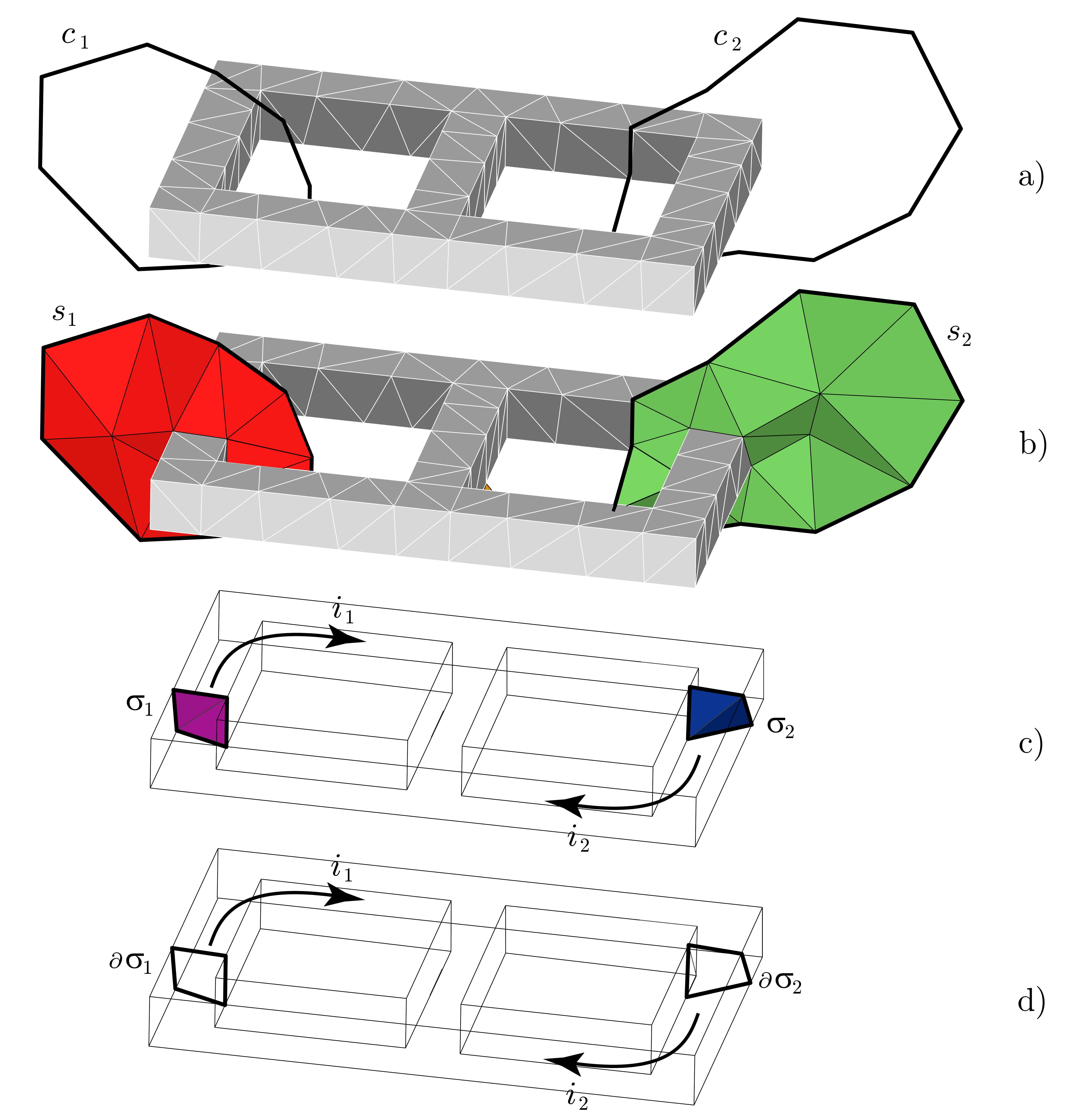}
\caption{a) Two non-trivial elements of $H_1(\mathcal{K}_a)$ called $c_1$ and $c_2$. b) Let us consider $2$-chains in $\mathcal{K}$ whose boundaries are $c_1$ and $c_2$. c) Two independent currents, $i_1$ and $i_2$, evaluated through $2$-chains $s_1$ and $s_2$ or through $2$-chains $\sigma_1$ and $\sigma_2$. d) The $1$-cycles $\partial \sigma_1$ and $\partial \sigma_2$ are in the same homology $H_1(\mathcal{K}_a)$ class as the corresponding $1$-cycle $c_1$ and $c_2$. This is formalized in the following of the paper.
} \label{fig:currents}
\end{figure}

The idea of last proof can be presented by using again the double torus example. Let us introduce two non-trivial elements of $H_1(\mathcal{K}_a)$ called $c_1$ and $c_2$ which are the representatives of generators of the first homology group $H_1(\mathcal{K}_a)$ and whose support is depicted in Fig. \ref{fig:currents}a.
The support of two $2$-chains $s_i$, $i \in \{1,2\}$, such that $\partial s_i=c_i$, are represented in Fig. \ref{fig:currents}b. It is easy to see that the supports of these $2$-chains have to intersect $\mathcal{K}_c$.

\subsection{Independent currents}\label{sec:ic}
After showing the inconsistency arising in general from the definition of potentials presented in Section \ref{spre}, some modifications in this definition are needed for Amp\`{e}re's law to hold implicitly for every $1$-cycle $c \in Z_1(\mathcal{K}_a)$ and a $2$-chain $s$ such that $c=\partial s$.
We define in this Section a key ingredient for the new definition of potentials, which is called set of \textit{independent currents}.

It has been already pointed out that the electric current is nonzero inside the conducting region $\mathcal{K}_c$ only. Consequently, for every chain $s \in C_2(\mathcal{K})$ for which $\partial s=c \in Z_1(\mathcal{K}_a)$, the current can be non-zero only on $|s| \cap \mathcal{K}_c$. Since $\mathcal{K}_c$ is a sub-complex of $\mathcal{K}$, the restriction of the $2$-chain $s$ to $\mathcal{K}_c$, denoted as $\sigma=s|_{\mathcal{K}_c}$, is an element of $C_2(\mathcal{K}_c)$. The supports of such restrictions of the $2$-chains $s_1$ and $s_2$ in the double torus example are the $\sigma_1$ and $\sigma_2$ shown in Fig. \ref{fig:currents}c.
Since $\partial \sigma \in C_1(\partial \mathcal{K}_c)$, we have that $\sigma \in Z_2(\mathcal{K}_c,\partial \mathcal{K}_c)$.
Consequently $\sigma$ can be generated from $H_2(\mathcal{K}_c,\partial \mathcal{K}_c)$ basis by adding the boundary of a suitable $3$-chain $d \in C_3(\mathcal{K}_c)$. The current through any chain non-zero in $H_2(\mathcal{K}_c,\partial \mathcal{K}_c)$ is determined by the current through the $H_2(\mathcal{K}_c,\partial \mathcal{K}_c)$ basis elements (for trivial $2$-chains and relative $2$-chains the current is zero as a consequence of local version of current continuity law~(\ref{continuity})).
%Considering the double torus example, the support of $\sigma_1$ and $\sigma_2 \in Z_2(\mathcal{K}_c,\partial \mathcal{K}_c)$ in Fig. \ref{fig:currents}c are the restrictions of $s_1$ and $s_2$ with respect to $\mathcal{K}_c$.

For the whole Section, let us fix the set of relative cycles $\sigma_1,\ldots,\sigma_n \in Z_2(\mathcal{K}_c, \partial \mathcal{K}_c)$ representing the homology group $H_2(\mathcal{K}_c, \partial \mathcal{K}_c)$ generators.
From the Theorem~\ref{th:relHomologyExactSequence} one may write the following exact sequence of the pair $(\mathcal{K},\mathcal{K}_a)$:
$$\ldots \xrightarrow{\partial} H_2(\mathcal{K}_a) \xrightarrow{i_{*}} H_2(\mathcal{K}) \xrightarrow{j_{*}} H_2(\mathcal{K},\mathcal{K}_a) \xrightarrow{\partial} H_{1}(\mathcal{K}_a) \xrightarrow{i_{*}} \ldots.$$
The assumption that the mesh $\mathcal{K}$ is acyclic provides $H_2(\mathcal{K}) = 0$. Consequently, $\partial : H_2(\mathcal{K},\mathcal{K}_a) \rightarrow H_{1}(\mathcal{K}_a)$ is an isomorphism defined in the following way $\partial : H_2(\mathcal{K},\mathcal{K}_a) \ni [\alpha] \rightarrow [\partial \alpha] \in H_{1}(\mathcal{K}_a)$.

Let us use Theorem~\ref{th:HatcherCorollary224} for the sub-complex $A$ equal to $\mathcal{K}_a$ and sub-complex $B$ equal to $\mathcal{K}_c$ and $X$ equal to $\mathcal{K}$. Since $\mathcal{K}_a \cap \mathcal{K}_c = \partial \mathcal{K}_c$, the following inclusion map $(\mathcal{K}_c,\partial \mathcal{K}_c)\hookrightarrow(\mathcal{K},\mathcal{K}_a)$ induces the
isomorphism $H_2(\mathcal{K}_c,\mathcal{K}_c\cap \mathcal{K}_a)  =  H_2(\mathcal{K}_c,\partial \mathcal{K}_c) \rightarrow H_2(\mathcal{K},\mathcal{K}_a)$. Consequently, we have the following sequence of isomorphisms
$$H_2(\mathcal{K}_c,\partial \mathcal{K}_c) \xrightarrow{(\mathcal{K}_c,\partial \mathcal{K}_c)\hookrightarrow(\mathcal{K},\mathcal{K}_a)} H_2(\mathcal{K},\mathcal{K}_a) \xrightarrow{\partial} H_{1}(\mathcal{K}_a),$$
where on the left-hand side there is the group generated by the $H_2(\mathcal{K}_c,\partial \mathcal{K}_c)$ generators and on the right-hand side there is the group generated by the classes of cycles on which the Amp\`{e}re's law has to be enforced. The image of the generators of $H_2(\mathcal{K}_c,\partial \mathcal{K}_c)$ through the above isomorphism is referred to as \textit{independent cycles}.

What we showed in this Section motivates the following definition:
\begin{definition}
The complex numbers being the dot-products of the current $\mathbf{I}$ with the representatives of a basis $\{\sigma_j\}$ of $H_2(\mathcal{K}_c,\partial \mathcal{K}_c)$ are called \textit{independent currents} $\{i_j\}$ in $\mathcal{K}_c$
$$i_j=\langle \mathbf{I}, \sigma_j \rangle, j \in \{1,\ldots,\beta_1(\mathcal{K}_a)\}.$$
\end{definition}
We want to point out that the $\{\sigma_j\}$ are integer $H_2(\mathcal{K}_c,\partial \mathcal{K}_c)$ homology generators. When evaluating the dot-product, we trivially interpret them as complex homology group generators.

Since the isomorphism $H_2(\mathcal{K}_c,\partial \mathcal{K}_c) \rightarrow H_2(\mathcal{K},\mathcal{K}_a)$ is induced by the inclusion map, $\sigma_1,\ldots,\sigma_n$ form also a set of cycles representing the basis of $H_2(\mathcal{K},\mathcal{K}_a)$. When passing by the isomorphism induced by the boundary map $\partial : H_2(\mathcal{K},\mathcal{K}_a) \rightarrow H_1(\mathcal{K}_a)$, one gets that $\partial \sigma_1,\ldots,\partial \sigma_n$ is a set of cycles representing a basis in $H_{1}(\mathcal{K}_a)$.

\subsection{Definition of cuts}\label{sec:defOfCuts}
First of all, let us note that it suffices to enforce Amp\`{e}re's law on the cycles $[\partial \sigma_1],\ldots,[\partial \sigma_n]$. Then, for every other cycle $[c] \in H_{1}(\mathcal{K}_a)$ such that $[c] = [\sum_{i=1}^n \lambda_i \partial \sigma_i]$, the current linked by $c$ is equal to $\sum_{i=1}^n \lambda_i \langle \mathbf{I} , \sigma_i \rangle=\sum_{i=1}^n \lambda_i i_i$, which follows from the fact that dot product of the m.m.f. on boundaries is zero.

Now, taking into account the arguments in the last Section, we would like to modify the definition (\ref{potential2}) of $\mathbf{F}$ in $\mathcal{K}_a$ in such a way that Amp\`{e}re's law is satisfied for all cycles $c \in Z_1(\mathcal{K}_a)$. Since $\mathbf{F}$ is a $1$-cocycle in $\mathcal{K}_a$, we are going to construct a family of $1$-cocycles $\{\mathbf{c}^i\}_{i=1}^n$ in $\mathcal{K}_a$ over $\mathbb{Z}$ which, after being multiplied by the independent currents $\{i_j\}_{i=1}^n$, are added to $\mathbf{T}$.
In particular, the family of $1$-cocycles $\{\mathbf{c}^i\}_{i=1}^n$ should verify $\langle \mathbf{c}^i,\partial \sigma_j \rangle = \delta_{ij}$ for every $i,j \in \{1,\ldots,n\}$.
%
%\textbf{PD -- new text, please verify!}
%We would like to point out a beautiful geometric intuition behind this abstract construction. When a certain value of independent current is set in the system of equations presented further on to a cocycle $c^i$, then the Amp\`{e}re's law is verified on every cycle in classes of $n \partial c_i$ for $n \in \mathbb{Z}$. Also in the simulated circuit exactly this value of current cross the generator $c_i$ of $H_2( \mathcal{K}_c , \partial \mathcal{K}_c )$.
%
We are going to show that, for this purpose, the representatives of a basis of the $1$-st cohomology group $H^1(\mathcal{K}_a)$ dual to the $H_1(\mathcal{K}_a)$ basis $[\partial \sigma_1],\ldots,[\partial \sigma_n]$ are needed. To prove the existence of the dual basis, let us recall the Universal Coefficient Theorem for cohomology.

\begin{theorem}[\cite{Hatcher}, Theorem 3.2]
\label{th:unicoefcohomo}
If a complex $\mathcal{K}_a$ has (integer) homology groups $H_n(\mathcal{K}_a)$, then the cohomology groups $H^n(\mathcal{K}_a , G)$ are determined by splitting exact sequences
$$
0 \rightarrow Ext(H_{n-1}(\mathcal{K}_a), G) \rightarrow H^{n}(\mathcal{K}_a,G)
\xrightarrow{h} Hom( H_n(\mathcal{K}_a) , G) \rightarrow 0.
$$
\end{theorem}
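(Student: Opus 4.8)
The plan is to derive the sequence purely algebraically from the cellular chain complex of $\mathcal{K}_a$, which consists of free abelian groups. I would work with $C_n = C_n(\mathcal{K}_a, \mathbb{Z})$, the cycles $Z_n = \ker \partial_n$, and the boundaries $B_n = \mathrm{im}\,\partial_{n+1}$, so that $H^n(\mathcal{K}_a, G)$ is the cohomology of the cochain complex $Hom(C_*, G)$. The map $h$ in the statement is the evaluation map: a cocycle $\phi \in Hom(C_n, G)$ satisfies $\phi \circ \partial_{n+1} = 0$, hence vanishes on $B_n$, so its restriction to $Z_n$ descends to a homomorphism $H_n = Z_n/B_n \to G$; one checks that coboundaries restrict to $0$ on $Z_n$, so this assignment is well defined on cohomology classes. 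The goal is then to prove that $h$ is surjective, that its kernel is $Ext(H_{n-1}(\mathcal{K}_a), G)$, and finally that the resulting sequence splits.

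First I would build a short exact sequence of cochain complexes. For each $n$ the boundary map gives $0 \to Z_n \to C_n \xrightarrow{\partial_n} B_{n-1} \to 0$, and since $B_{n-1}$ is a subgroup of the free abelian group $C_{n-1}$ it is itself free; hence this sequence splits. A split short exact sequence stays exact after applying the contravariant functor $Hom(-, G)$, yielding $0 \to Hom(B_{n-1}, G) \to Hom(C_n, G) \to Hom(Z_n, G) \to 0$. Assembling these over all $n$, and regarding $\{Z_n\}$ and $\{B_{n-1}\}$ as complexes with zero differentials, I obtain a short exact sequence of cochain complexes whose middle term is $Hom(C_*, G)$.

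Next I would pass to the induced long exact sequence in cohomology. Because the outer complexes carry trivial differentials, their cohomology in degree $n$ is simply $Hom(Z_n, G)$ and $Hom(B_{n-1}, G)$, so the long exact sequence reads $\cdots \to Hom(B_{n-1}, G) \to H^n(\mathcal{K}_a, G) \to Hom(Z_n, G) \xrightarrow{i_n^*} Hom(B_n, G) \to \cdots$. The crucial identification, and the step I expect to be the most delicate, is that the connecting homomorphism $Hom(Z_n, G) \to Hom(B_n, G)$ is exactly $i_n^*$, the dual of the inclusion $i_n : B_n \hookrightarrow Z_n$; verifying this requires chasing the definition of the connecting map through the splitting chosen above. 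Granting this, extracting the relevant four-term piece around degree $n$ gives the short exact sequence $0 \to \mathrm{coker}(i_{n-1}^*) \to H^n(\mathcal{K}_a, G) \to \ker(i_n^*) \to 0$.

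Finally I would identify the two outer terms via the free resolution of homology. Since $B_n$ and $Z_n$ are free, the sequence $0 \to B_n \xrightarrow{i_n} Z_n \to H_n \to 0$ is a free resolution of $H_n$, so dualizing it computes $\ker(i_n^*) = Hom(H_n(\mathcal{K}_a), G)$ and $\mathrm{coker}(i_{n-1}^*) = Ext(H_{n-1}(\mathcal{K}_a), G)$, using that $Ext$ of the free group $Z_{n-1}$ vanishes. This yields the asserted sequence, and the surjection onto $Hom(H_n, G)$ coincides with $h$ by construction. For the splitting I would choose a retraction $r : C_n \to Z_n$ of the inclusion $Z_n \hookrightarrow C_n$, available from the splitting already used in the second paragraph; the induced map produces a section of $h$, showing that the sequence splits, although the splitting is not natural in $\mathcal{K}_a$.
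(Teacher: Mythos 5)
Your proposal is correct and follows the standard argument from Hatcher (the source the paper cites for this theorem): splitting off $0 \to Z_n \to C_n \to B_{n-1} \to 0$, dualizing, identifying the connecting map with $i_n^*$, and reading off $Hom$ and $Ext$ from the free resolution $0 \to B_n \to Z_n \to H_n \to 0$. The paper itself does not reprove the theorem; it only spells out the definition of the evaluation map $h$ and checks it is well defined on cohomology classes, which is exactly the content of your first paragraph, so there is nothing to fault.
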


In this paper, there is no need to go into the definition of the
$Ext$ functor. The key property is that $Ext(Q,G) = 0$
if $Q$ is a free group. For further details and proof of this
property consult \cite{Hatcher}.

For a class
$[d] \in H^n(\mathcal{K}_a,G)$, since $d$ is a cocycle,
one has $0 = \langle \delta d, z \rangle = \langle d,\partial z \rangle$ for every $z \in Z_{n+1}(\mathcal{K}_a)$. From the above
equality, it follows that $d|_{B_n(\mathcal{K}_a)} = 0$. Let us
define the restriction $d_0 = d|_{Z_n(\mathcal{K}_a)}$. Since
$d_0|_{B_n(\mathcal{K}_a)} = 0$, then $d_0 \in Hom( H_n(\mathcal{K}_a) , G )$.
This shows the correctness of the definition of the map
$h([d]) = d_0 \in Hom( H_n(\mathcal{K}_a) , G )$ used in the exact sequence in Theorem~\ref{th:unicoefcohomo}.
Finally, we need to show that, for $d,d' \in [d]$, one has $h(d) = h(d')$. Since $d,d' \in [d]$, there exists $e \in C^0(K)$ such that $d = d' + \delta e$. Let us take a cycle $f \in C_1(\mathcal{K})$, then we have $\partial f = 0$. Consequently $\langle d , f \rangle = \langle d'+\delta e , f \rangle = \langle d' , f \rangle + \langle \delta e , f \rangle = \langle d' , f \rangle$. Therefore, the value of the map $h$ does not depend on the representatives of the cocylce and $h$ is well defined.

In our case the group $G$ is the group of integers and the Universal
Coefficient Theorem for cohomology is used for $n=1$.
In this case, the exact sequence from Theorem~\ref{th:unicoefcohomo} has the form
\[
0 \rightarrow Ext(H_{0}(\mathcal{K}_a), \mathbb{Z}) \rightarrow
H^{1}(\mathcal{K}_a, \mathbb{Z}) \xrightarrow{h} Hom( H_1(\mathcal{K}_a) ,
\mathbb{Z}) \rightarrow 0.
\]

For the complex $\mathcal{K}_a$ we have that  $H_0(\mathcal{K}_a) = \mathbb{Z}^p$ for some $p \in \mathbb{Z}$, $p >0$. This provides that
$H_0(\mathcal{K}_a)$ is a free group. From the cited property of the $Ext$ functor, it follows that
$Ext(H_{0}(\mathcal{K}_a), \mathbb{Z}) = 0$. From the exactness of the sequence, one has that $h : H^{1}(\mathcal{K}_a,\mathbb{Z}) \rightarrow Hom(H_1(\mathcal{K}_a) , \mathbb{Z})$ is an isomorphism.

Due to Theorem~\ref{th:homologyAreTorsionFree} the homology group $H_1(\mathcal{K}_a)$ is torsion free. This provides, from Theorem 3.61 in~\cite{CompHom}, that it is isomorphic to the direct sum of $\dim(H_1(\mathcal{K}_a)) = n$ copies of $\mathbb{Z}$. From the set of cycles $\partial \sigma_1,\ldots,\partial \sigma_n$ %\footnote{The cycles $\partial \sigma_1,\ldots,\partial \sigma_n$ are, as previously defined, a $H_2(\mathcal{K}_c,\partial \mathcal{K}_c)$ basis.}
forming a $H_1(\mathcal{K}_a)$ basis, a set of functions $\zeta_i$, $i \in \{1,\ldots,n\}$ such that $\zeta_i([\partial \sigma_j]) = \delta_{ij}$ form a basis of $Hom(H_1(\mathcal{K}_a) , \mathbb{Z})$. From the description of the isomorphism $h : H^{1}(\mathcal{K}_a,\mathbb{Z}) \rightarrow Hom(H_1(\mathcal{K}_a) , \mathbb{Z})$, it is straightforward that $h^{-1}(\zeta_i)$ is a cochain $\mathbf{c}^i$ being an element of the $H^{1}(\mathcal{K}_a,\mathbb{Z})$ basis we are looking for.

In the presented reasoning we have started from the set of independent currents and end up to the cohomology basis. The Reader should be aware that exactly the same reasoning can be made the other way around. Namely, if one starts from the $H^1(\mathcal{K}_a)$ basis, then is able to find the corresponding $H_2(\mathcal{K},\mathcal{K}_a)$ basis, which directly corresponds to some basis $H_2(\mathcal{K}_c,\partial \mathcal{K}_c)$ yielding a set of independent currents.

The following Theorem follows easily form what has been already said.
\begin{theorem}
\label{th:defOfCuts}
Let $\{ \mathbf{c}^i \}_{i=1}^{\beta_1(\mathcal{K}_a)}$ be the cocycles representing the $H^1(\mathcal{K}_a)$ basis.
Let $\{\partial \sigma_i\}_{i=1}^{\beta_1(\mathcal{K}_a)}$ be the cycles representing the dual $H_1(\mathcal{K}_a)$ basis.
Once we redefine the m.m.f. as $\mathbf{F} = \delta \boldsymbol{\Omega}+\mathbf{T} + \sum_{i=1}^{\beta_1(\mathcal{K}_a)} i_i \mathbf{c}^i$, then the current linked by the cycles in the homology class of $\partial \sigma_i$ is equal to $i_i$. Amp\`{e}re's law (\ref{nl_ampere}) holds for every $1$-cycle $c \in Z_1(\mathcal{K}_a)$. Hence, the potentials are now consistently designed.
\end{theorem}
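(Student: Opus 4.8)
The plan is to reduce Ampère's law (\ref{nl_ampere}) from a claim about every $1$-cycle to a claim about a basis of $H_1(\mathcal{K}_a)$, and then to settle it there by a direct computation. First I would check that, with the redefined m.m.f.\ $\mathbf{F}=\delta\boldsymbol{\Omega}+\mathbf{T}+\sum_i i_i\mathbf{c}^i$, the restriction of $\mathbf{F}$ to $\mathcal{K}_a$ is still a $1$-cocycle: on $\mathcal{K}_a$ one has $\langle\mathbf{T},e\rangle=0$ for every edge $e$, while $\delta\boldsymbol{\Omega}$ is a coboundary and each $\mathbf{c}^i$ is a cocycle. Consequently both sides of (\ref{nl_ampere}) are functions of homology classes only: the right-hand side $I_c$ depends solely on $[c]\in H_1(\mathcal{K}_a)$ by Theorem~\ref{lc2}, and the left-hand side $\langle\mathbf{F},c\rangle$ depends solely on $[c]$ because $\mathbf{F}|_{\mathcal{K}_a}$ is a cocycle. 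Thus $c\mapsto\langle\mathbf{F},c\rangle$ and $c\mapsto I_c$ both descend to homomorphisms $H_1(\mathcal{K}_a)\to\mathbb{C}$, and to prove their equality it suffices to verify it on the basis $\{[\partial\sigma_j]\}_{j=1}^{n}$.

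The central step is to evaluate $\langle\mathbf{F},\partial\sigma_j\rangle$ term by term. Using the adjunction $\langle\delta c^*,d\rangle=\langle c^*,\partial d\rangle$ and $\partial\partial=0$, the potential term contributes $\langle\delta\boldsymbol{\Omega},\partial\sigma_j\rangle=\langle\boldsymbol{\Omega},\partial\partial\sigma_j\rangle=0$; the term $\langle\mathbf{T},\partial\sigma_j\rangle$ vanishes because every edge in the support of $\partial\sigma_j$ lies in $\mathcal{K}_a$, where $\mathbf{T}$ is zero by construction; and the last term is $\sum_i i_i\langle\mathbf{c}^i,\partial\sigma_j\rangle$. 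Here the entire point of the cohomology construction enters, since the $\mathbf{c}^i$ were produced precisely as the dual basis satisfying $\langle\mathbf{c}^i,\partial\sigma_j\rangle=\delta_{ij}$; the sum therefore collapses to $i_j$, giving $\langle\mathbf{F},\partial\sigma_j\rangle=i_j$.

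It then remains to match $i_j$ with the linked current of $\partial\sigma_j$. Since $\sigma_j\in C_2(\mathcal{K}_c)\subset C_2(\mathcal{K})$ is a $2$-chain whose boundary is $\partial\sigma_j$, it is an admissible choice of $s$ in Definition~\ref{lcNew}, whence $I_{\partial\sigma_j}=\langle\mathbf{I},\sigma_j\rangle=i_j$ by the very definition of the independent currents. Comparing, $\langle\mathbf{F},\partial\sigma_j\rangle=i_j=I_{\partial\sigma_j}$ on each basis element, so by linearity Ampère's law (\ref{nl_ampere}) holds for every $c\in Z_1(\mathcal{K}_a)$; the statement in the theorem that the class of $\partial\sigma_i$ links current $i_i$ is exactly the instance $j=i$. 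I do not anticipate a genuine obstacle, because the hard content---the existence of a cohomology basis $\{\mathbf{c}^i\}$ dual to $\{[\partial\sigma_j]\}$---has already been secured through the Universal Coefficient Theorem in the preceding discussion. The only point that demands care is the reduction in the first paragraph: one must explicitly verify that $\mathbf{F}|_{\mathcal{K}_a}$ is a cocycle and invoke Theorem~\ref{lc2}, so that testing on a homology basis is legitimate instead of arguing cycle by cycle.
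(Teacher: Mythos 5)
Your proposal is correct and follows essentially the same route as the paper, which leaves the theorem as an immediate consequence of the preceding discussion: reduce Amp\`{e}re's law to the basis $\{[\partial \sigma_j]\}$ of $H_1(\mathcal{K}_a)$ (legitimate since both $c \mapsto \langle \mathbf{F}, c\rangle$ and $c \mapsto I_c$ descend to $H_1(\mathcal{K}_a)$ by Theorem~\ref{lc2} and the cocycle property of $\mathbf{F}|_{\mathcal{K}_a}$), then use the duality $\langle \mathbf{c}^i, \partial \sigma_j\rangle = \delta_{ij}$ together with $I_{\partial \sigma_j} = \langle \mathbf{I}, \sigma_j\rangle = i_j$. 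Your write-up merely makes explicit the term-by-term evaluation that the paper treats as already established.
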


Further in this paper we assume that the cocycles $\{ \mathbf{c}^i \}_{i=1}^{\beta_1(\mathcal{K}_a)}$ representing the $H^1(\mathcal{K}_a)$ basis are defined in the whole complex $\mathcal{K}$. Therefore, we assume that $\langle \mathbf{c}^i , e \rangle = 0$ for every $i \in \{1,\ldots,\beta_1(\mathcal{K}_a)\}$ and for every $e \in \mathcal{K}_c \setminus \mathcal{K}_a$.

Thus we are now able to give the definition of cuts:
\begin{definition}
The cuts $\{ \mathbf{c}^j \}_{j=1}^{\beta_1(\mathcal{K}_a)}$ are defined as representatives of the first cohomology group generators over integers of the insulating region $\mathcal{K}_a$. \end{definition}

\begin{figure} [!h]
\centering
\includegraphics[width=13cm]{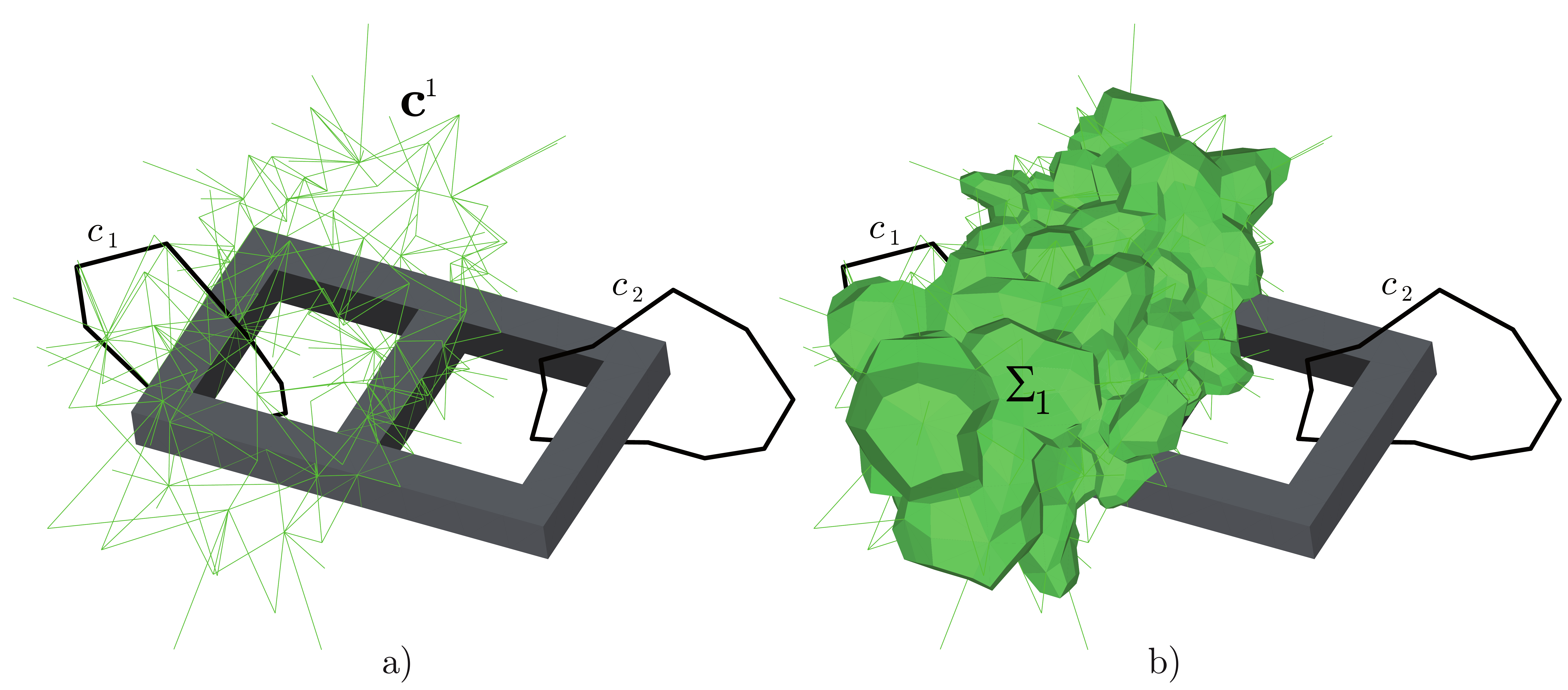}
\caption{a) The two generators $c_1$ and $c_2$ and the support of the $\mathbf{c}^1$ $1$-cochain is shown for the double torus example. b) The dual faces $f_{\mathcal{B}}$, dual to edges in the support of $\mathbf{c}^1$, from a $2$-chain $\Sigma_1$ on the barycentric complex.} \label{fig:generators}
\end{figure}
To illustrate the presented idea let us consider the two fixed generators $c_1$ and $c_2$ for $H_1(\mathcal{K}_a)$ relative to the previous example, see Fig. \ref{fig:generators}a. In the same Figure, a concrete example of a representative of a $H^1(\mathcal{K}_a)$ cohomology generator---dual to the $H_1(\mathcal{K}_a)$ generator $c_1$---is shown. The edges in the picture are the ones that constitute the support of $\mathbf{c}^1$. The integers associated to these edges are given in such a way that $\langle \mathbf{c}^1 , c_1 \rangle = 1$ and $\langle \mathbf{c}^1 , c_2 \rangle = 0$.

\subsubsection{Interpretation of cuts on the dual complex}
\label{sec:cutsOnDualCmplx}
Some results presented in this Section are part of an old-fashioned proof of Poincar\'{e}--Lefschetz duality for manifolds with boundary. For this paper, the following restricted version of the duality is used:
\begin{theorem}[Poincar\'{e}--Lefschetz duality]
$H^1(\mathcal{K}_a) \cong H_2(\mathcal{B}_a,\partial \mathcal{B}_a)$.
\end{theorem}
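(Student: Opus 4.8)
The plan is to turn the geometric duality into the purely algebraic duality that is already built into the two chain complexes of Section~\ref{sec:ATinCE}, using the barycentric dual cells explicitly; this is the content of the ``old-fashioned'' combinatorial proof. First I would invoke the one-to-one cell correspondence $c \leftrightarrow \tilde c$ between an $i$-cell of $\mathcal{K}_a$ and its dual $(3-i)$-cell of $\mathcal{B}_a$ (recall $n_0 = \dim\mathcal{K} = 3$). Representing cochains as chains via the scalar product, as in Section~\ref{sec:BasicAT}, this correspondence yields for each $i$ a linear isomorphism sending the generator $e$ of $C^i(\mathcal{K}_a)$ to the generator $\tilde e$ of the dual group; in the case of interest it identifies the $1$-cochains of $\mathcal{K}_a$ with the $2$-chains of $\mathcal{B}_a$.

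The key step is to promote this cell bijection to an isomorphism of \emph{complexes}, i.e.\ to check that it intertwines the coboundary $\delta$ on $\mathcal{K}_a$ with the dual boundary $\tilde\partial$ on $\mathcal{B}_a$ (an overall sign is irrelevant for kernels and images). This is immediate from the relation defining $\tilde\partial$: for $d \in \mathcal{K}_i$ and $c \in \mathcal{K}_{i+1}$ one has $\langle \delta d, c \rangle_{\mathcal{K}} = \langle d, \partial c \rangle_{\mathcal{K}} = \langle \tilde\partial \tilde d, \tilde c \rangle_{\mathcal{B}}$, so in the dual bases the matrix of $\delta^i$ coincides with that of $\tilde\partial : C_{3-i}(\mathcal{B}_a) \to C_{3-i-1}(\mathcal{B}_a)$. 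Specialising to $i=1$, the operator $\delta^1$ corresponds to $\tilde\partial_2$ and $\delta^0$ to $\tilde\partial_3$, so that $\mathrm{Ker}\,\delta^1$ matches $\mathrm{Ker}\,\tilde\partial_2$ and $\mathrm{Im}\,\delta^0$ matches $\mathrm{Im}\,\tilde\partial_3$, and reading off the quotient gives the desired isomorphism.

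The remaining and genuinely delicate point is that the target is the \emph{relative} group $H_2(\mathcal{B}_a,\partial\mathcal{B}_a)$ rather than the absolute $H_2(\mathcal{B}_a)$, which is the signature of $\mathcal{K}_a$ being a $3$-manifold \emph{with} boundary. When $\mathcal{B}_a$ is assembled as the merge of the interior dual and the dual of $\partial\mathcal{K}_a$ (Section~\ref{sec:ATinCE}), its $(3-i)$-cells split as the blocks $\tilde e$ dual to the $i$-cells $e$ of $\mathcal{K}_a$ together with the surplus $(3-i)$-cells lying in $\partial\mathcal{B}_a$, the latter forming exactly $C_{3-i}(\partial\mathcal{B}_a)$. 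Hence the correct chain-level statement is $C^i(\mathcal{K}_a) \cong C_{3-i}(\mathcal{B}_a,\partial\mathcal{B}_a)$, the quotient by $C_{3-i}(\partial\mathcal{B}_a)$ discarding precisely the extra boundary blocks and leaving one generator $\tilde e$ for each cell of $\mathcal{K}_a$. Carrying the computation through the relative complex then yields
$$H^1(\mathcal{K}_a) \cong \frac{\mathrm{Ker}\big(\tilde\partial_2 : C_2(\mathcal{B}_a,\partial\mathcal{B}_a)\to C_1(\mathcal{B}_a,\partial\mathcal{B}_a)\big)}{\mathrm{Im}\big(\tilde\partial_3 : C_3(\mathcal{B}_a,\partial\mathcal{B}_a)\to C_2(\mathcal{B}_a,\partial\mathcal{B}_a)\big)} = H_2(\mathcal{B}_a,\partial\mathcal{B}_a).$$

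The main obstacle is exactly this boundary bookkeeping: verifying that the merged dual complex makes $\partial\mathcal{B}_a$ an honest subcomplex with $\mathcal{B}_a$ a regular CW realisation homeomorphic to $\mathcal{K}_a$, and that the relative boundary operator on $C_\bullet(\mathcal{B}_a,\partial\mathcal{B}_a)$ still matches $\delta$ once the boundary blocks are quotiented out (together with the orientation/sign conventions in $\tilde\kappa(\tilde f,\tilde e) = \pm\,\kappa(e,f)$). As a consistency check one can note that, since $H_1(\mathcal{K}_a)$ is torsion free by Theorem~\ref{th:homologyAreTorsionFree}, the Universal Coefficient Theorem gives $H^1(\mathcal{K}_a) \cong \mathrm{Hom}(H_1(\mathcal{K}_a),\mathbb{Z}) \cong H_1(\mathcal{K}_a)$, so the claim is equivalent to the Lefschetz duality $H_1(\mathcal{K}_a) \cong H_2(\mathcal{K}_a,\partial\mathcal{K}_a)$ transported along the homeomorphism $\mathcal{B}_a \approx \mathcal{K}_a$; this is the abstract statement that the explicit dual-cell argument above realises concretely.
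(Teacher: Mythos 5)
Your proposal is correct in outline and follows essentially the same route the paper takes: the paper does not prove the theorem itself but appeals to the classical dual-cell proof via the dualization operator $D$ of Section~\ref{sec:ATinCE} (citing \cite{berdon}, \cite{seifThrel}), which is exactly the chain-level identification $C^i(\mathcal{K}_a)\cong C_{3-i}(\mathcal{B}_a,\partial\mathcal{B}_a)$ intertwining $\delta$ with the relative $\tilde\partial$ that you spell out. The boundary bookkeeping and the verification that the dual blocks form a genuine regular CW structure (which uses that $\mathcal{K}_a$ is a compact $3$-manifold with boundary) are the same details the paper delegates to its references.
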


The modern proof of this famous Theorem bases on the idea of cup product, as for instance in~\cite{Hatcher}.
However, the original proof proposed by Poincar\'{e} himself\footnote{Which turned out not to be
complete, but was corrected later on by the Poincar\'{e}'s successors.}, is based on the concept of dual cell structure, which has been described in this paper in Section~\ref{sec:ATinCE}. For the classical proof of Poincar\'{e}--Lefschetz duality, one may consult
\cite{berdon} or \cite{seifThrel}. In this proof the \textit{dualization operator} $D$ is defined on the complex
$\mathcal{K}_a$ in the way that, for a $1$-cell $c \in \mathcal{K}_a$, the corresponding dual $2$-cell $Dc \in \mathcal{B}_a$ is
assigned. The presented map turns out to induce the isomorphism in the Poincar\'{e}--Lefschetz duality (for further details consult~\cite{berdon}).

Consequently, form the Poincar\'{e}--Lefschetz duality, once the $1$-cocycles that represent a basis $\{\mathbf{c}^1,\ldots,\mathbf{c}^n\}$ of $H^1(\mathcal{K}_a)$ are provided, it is clear that the set of dual $2$-cycles $d_1,\ldots,d_n$ defined in the following way:
$$ d_i = \sum_{S \in \mathcal{B}_a} \alpha^s_i S, \text{ where } \alpha^s_i = \langle \mathbf{c}^i,D^{-1}S \rangle$$
are the relative cycles that represent a basis of $H_2(\mathcal{B}_a,\partial \mathcal{B}_a)$. These cycles are denoted as $\{\Sigma_i\}_{i=1}^{\beta_1(\mathcal{K}_a)}$. The visualization of the presented duality for the proposed example can be seen in Fig.~\ref{fig:generators}. On the left, the cohomology generator $\mathbf{c}^i$ is represented, while, on the right, the representative $\Sigma_i$ of the generator of $H_2(\mathcal{B}_a,\partial \mathcal{B}_a)$ is depicted.

\section{$T$-$\Omega$ magneto-quasi-static formulation}
\label{sec:F-Omega}
After the potential design, in this Section we analyze how to solve the magneto-quasi-static BVP.

\subsection{Constitutive matrices}
\label{sec:constitutiveMatrices}
The discrete counterparts of the constitutive laws links $k$-cochains in $\mathcal{K}$ with $(3-k)$-cochains in $\mathcal{B}$:
\begin{equation}\label{tomega2}
\begin{array}{llll}
   \tilde{\boldsymbol{\Phi}}=\boldsymbol{\mu} \, \mathbf{F}\,\,\,\, \: \text{  (a),}& \,\,\,\,\,\,\,\,
   \left.\tilde{\mathbf{U}}\right|_{\mathcal{B}_c}=\boldsymbol{\rho}\,\left.\mathbf{I}\right|_{\mathcal{K}_c}\,\,\,\, \: \text{  (b).}
\end{array}
\end{equation}
The constitutive matrices provide a relation between cochains on the primal and cochains on the dual complex (see Section~\ref{sec:ATinCE}).
The square matrix $\boldsymbol{\mu}$ is called \textit{permeance matrix} and is the approximate discrete counterpart of the constitutive
relation $\mathrm{B}=\mu\,\mathrm{H}$ at continuous level, $\mu$ being the
permeability assumed element-wise constant and $\mathrm{H}$ and $\mathrm{B}$ are the magnetic field and the magnetic flux density vector fields, respectively.
%This matrix is constructed in such a way that
%(\ref{tomega2}a) holds for an element-wise uniform $\mathrm{H}$ and $\mathrm{B}$ in each
%$3$-dimensional cell\footnote{See \cite{bossa_libro} for more details.} \textbf{Here I have some conceptual problem, because in \ref{tomega2}a there is no $\mathrm{H}$ or $\mathrm{B}$. Can we explain it better?}.
The square matrix $\boldsymbol{\rho}$ is called \textit{resistivity matrix} and is the approximate discrete
counterpart of the constitutive relation $\mathrm{E}=\rho\,\mathrm{J}$ at
continuous level, $\rho$ being the resistivity assumed element-wise a constant and $\mathrm{E}$ and $\mathrm{J}$ are the electric field and the current density vector fields, respectively. $\boldsymbol{\rho}$ is defined to be zero for geometric elements in $\mathcal{K}_a \setminus \mathcal{K}_c$.
%This matrix is constructed in such a way that (\ref{tomega2}b) holds for an
%element-wise uniform $\mathrm{J}$ and $\mathrm{E}$ in each $3$-dimensional cell.

Describe in detail how to construct the constitutive matrices $\boldsymbol{\rho}$ and $\boldsymbol{\mu}$ goes beyond the
purpose of this paper. Methods valid for a general polyhedral mesh are described for
example in \cite{cmamepoly}, \cite{jcppoly} and references therein.

\subsection{Algebraic equations}
\label{sec:AlgebraicEquations}
In this Section, the constitutive matrices described in the Section~\ref{sec:constitutiveMatrices} are combined with the local algebraic laws presented in Section~\ref{sec:MaxwellAlgForm} to obtain an algebraic system of equations.

Up to now, we know that Amp\`{e}re's law holds in $\mathcal{K}_a$ and current continuity law holds in $\mathcal{K}$. Hence, we have to enforce the other laws by means of a linear system of equations.

%Let us start with local magnetic Gauss's law (\ref{gauss}), which has still to be enforced in all $\mathcal{K}$.
To do this, let us start from the magnetic Gauss's law~(\ref{gauss}) $\langle \tilde{\delta} \tilde{\boldsymbol{\Phi}}, v_{\mathcal{B}} \rangle=0$ and let us use the constitutive relation~(\ref{tomega2}a) $\tilde{\boldsymbol{\Phi}}=\boldsymbol{\mu} \, \mathbf{F}$. Consequently, we get $\langle \tilde{\delta} \boldsymbol{\mu} \, \mathbf{F}, v_{\mathcal{B}} \rangle=0$. The definition of potentials $\mathbf{F} = \delta \boldsymbol{\Omega}+\mathbf{T} + \sum_{j=1}^{\beta_1(\mathcal{K}_a)} i_j \mathbf{c}^j$ from Theorem~\ref{th:defOfCuts} is substituted in the last equation. In this way, the final equation is obtained:
\begin{equation}\label{eq1prime}
\tilde{\delta} \boldsymbol{\mu} \delta \boldsymbol{\Omega}
+
\tilde{\delta} \boldsymbol{\mu} \mathbf{T}
+
 \sum_{j=1}^{\beta_1(\mathcal{K}_a)} \tilde{\delta} \boldsymbol{\mu} \mathbf{c}^j\,i_j  = \mathbf{0}.
\end{equation}

Now, Faraday's law has still to be enforced in the conducting region (due to the boundary condition $\langle \mathbf{T}, e\rangle=0$ on edges $e \in \partial \mathcal{K}_c$, the e.m.f. is not needed in the insulating region by using this formulation, see for example \cite[p. 1030]{carpenter} for a more detailed explanation).
To do this, let us start from the local Faraday's law~(\ref{faraday})
$\langle \tilde{\delta} \tilde{\mathbf{U}} + i \omega \tilde{\boldsymbol{\Phi}}, f_{\mathcal{B}} \rangle = 0$, $\forall f_{\mathcal{B}} \in \mathcal{B}_c$.
Now, let us substitute the constitutive relations~(\ref{tomega2}) %$\tilde{\mathbf{\Phi}}=\boldsymbol{\mu} \mathbf{F}$, $\left.\tilde{\mathbf{U}}\right|_{\mathcal{B}_c}=\boldsymbol{\rho}\,\left.\mathbf{I}\right|_{\mathcal{K}_c}$
in the above equation. In this way, we obtain $\langle \tilde{\delta} \boldsymbol{\rho} \mathbf{I} + i \omega \boldsymbol{\mu} \mathbf{F}, f_{\mathcal{B}} \rangle = 0$, $\forall f_{\mathcal{B}} \in \mathcal{B}_c$.
Let us now use the local Amp\`{e}re's law (\ref{ampere}) in $\mathcal{K}_c$ by substituting $\delta \mathbf{F}= \mathbf{I}$
\[\langle \tilde{\delta} \boldsymbol{\rho} \delta \mathbf{F} + i \omega \boldsymbol{\mu} \mathbf{F}, f_{\mathcal{B}} \rangle = 0, \forall f_{\mathcal{B}} \in \mathcal{B}_c.\]
For the sake of brevity, let us define a matrix $\mathbf{R}=\tilde{\delta} \boldsymbol{\rho} \delta + i \omega \boldsymbol{\mu}$. Hence, we can write
$$\langle \mathbf{RF}, f_{\mathcal{B}}\rangle=\langle \mathbf{R} \left( \delta \boldsymbol{\Omega}+\mathbf{T} + \sum_{j=1}^{\beta_1(\mathcal{K}_a)} i_j \mathbf{c}^j \right), f_{\mathcal{B}}\rangle=0, \forall f_{\mathcal{B}} \in \mathcal{B}_c.$$
Since $\mathbf{R} \delta \boldsymbol{\Omega}=i \omega \boldsymbol{\mu} \delta \boldsymbol{\Omega}$, we obtain:
\begin{equation}\label{eq2prime}
\langle i \omega \boldsymbol{\mu} \delta \boldsymbol{\Omega} + \mathbf{R T} + \sum_{j=1}^{\beta_1(\mathcal{K}_a)} \mathbf{R} \mathbf{c}^j i_j, f_{\mathcal{B}} \rangle = 0, \forall f_{\mathcal{B}} \in \mathcal{B}_c.
\end{equation}
The unknowns are the coefficients of the cochain $\boldsymbol{\Omega}$ associated with each node and the coefficients of $\mathbf{T}$ associated with edges belonging to $\mathcal{K}_c \setminus \mathcal{K}_a$ (in fact, $\langle \mathbf{T},e\rangle=0$, $\forall e \in \mathcal{K}_a$). We have written the equations (\ref{eq1prime}) and (\ref{eq2prime}) corresponding to these unknowns.
But we have also the independent currents $i_j$ as additional unknowns. Which are the needed additional equations and where do they come from?

\subsection{Non-local Faraday's equations and the final linear system of equations}
The dot product of the e.m.f. $\langle \tilde{\mathbf{U}},b \rangle$ with every bounding $1$-cycle $b \in C_1(\mathcal{B}_c)$ is easily determined by using a non-local Faraday's law
$$\langle\tilde{\mathbf{U}},b \rangle=\langle -i \omega \tilde{\boldsymbol{\Phi}},s \rangle, \,\,b \in B_1(\mathcal{B}_c) \,\, \mathrm{and}\,\, b=\partial s,$$
which is a linear combination of local Faraday's laws already enforced by (\ref{eq2prime}).
Similarly to what developed about the independent currents in section \ref{sec:ic}, the linked flux $\tilde{\Phi}_c=\langle \tilde{\boldsymbol{\Phi}},s \rangle$, linked by the cycle $b$, does not depend on the $2$-chain $s$. This is because the local Gauss's magnetic law (\ref{gauss}) hold thanks to (\ref{eq1prime}). Hence, such non-local equations written on boundaries do not bring any new constraint.

On the opposite, $\langle \tilde{\mathbf{U}},h \rangle$ over a $1$-cycle $h \in C_1(\mathcal{B}_c)$ nonzero in $H_1(\mathcal{B}_c)$ cannot be determined by using only cochains in $\mathcal{B}_c$. This is because $\langle \tilde{\mathbf{U}},h \rangle$ depends on cochains in $\mathcal{B}_c$ and $\mathcal{B}_a$ through the non-local Faraday's law. Namely $\langle \tilde{\mathbf{U}},h\rangle$ has to match the magnetic flux variation $\langle - i \omega \tilde{\boldsymbol{\Phi}},s \rangle$ through a $2$-chain $s$ such that $c=\partial s$. The key point is that the support of $s$ extends also in the sub-complex $\mathcal{B}_a$.

We need to show that the $2$-chains used to take dot product with the magnetic flux, whose boundary are the $H_1(\mathcal{B}_c)$ generators, are generators for $H_2(\mathcal{B}, \mathcal{B}_c)$. Similarly to what done with the currents, we need just the $H_2(\mathcal{B}_a, \partial \mathcal{B}_a)$ generators and the reasoning presented in this Section is analogous to the one presented in Section~\ref{sec:defOfCuts}.
Let us take a fixed set of $1$-cocycles $\{\mathbf{c}^1,\ldots,\mathbf{c}^n\}$, $c^i \in C^1(\mathcal{K}_a)$ for $i \in \{1,\ldots,n\}$, representing the basis of $H^1(\mathcal{K}_a)$. The set of cycles $\{d_1,\ldots,d_n\}$, $d_i \in C_2(\mathcal{B}_a,\partial \mathcal{B}_a)$ for $i \in \{1,\ldots,n\}$, defined in the Section~\ref{sec:cutsOnDualCmplx}, represents a basis of $H_2(\mathcal{B}_a,\partial \mathcal{B}_a)$.
Let us write the long exact sequence of the pair $(\mathcal{B}, \mathcal{B}_c)$:
$$\ldots \xrightarrow{\partial} H_n(\mathcal{B}_c) \xrightarrow{i_{*}} H_n(\mathcal{B}) \xrightarrow{j_{*}} H_n(\mathcal{B} , \mathcal{B}_c) \xrightarrow{\partial} H_{n-1}(\mathcal{B}_c) \xrightarrow{i_{*}} \ldots.$$
Since $\mathcal{B}$ is acyclic, $H_n(\mathcal{B})$ is trivial. Therefore, from the exactness of the sequence, $\partial : H_n(\mathcal{B} , \mathcal{B}_c) \rightarrow H_{n-1}(\mathcal{B}_c)$ is an isomorphism.
Let us now use the Theorem~\ref{th:HatcherCorollary224} for $X = \mathcal{B}$, $A = \mathcal{B}_c$ and $B = \mathcal{B}_a$ and $n=2$. This gives us the isomorphism $H_2(\mathcal{B}_a,\mathcal{B}_a \cap \mathcal{B}_c) = H_2(\mathcal{B}_a,\partial \mathcal{B}_a) \hookrightarrow H_1(\mathcal{B} , \mathcal{B}_c)$.
Consequently, we have the sequence of isomorphisms $H_2(\mathcal{B}_a,\partial \mathcal{B}_a) \hookrightarrow H_2(\mathcal{B} , \mathcal{B}_c) \xrightarrow{\partial} H_{1}(\mathcal{B}_c)$.
Therefore, the set of cycles $\{\partial d_1,\ldots,\partial d_n\}$, $\partial d_i \in C_1(\mathcal{B}_c)$ for $i \in \{1,\ldots,n\}$, represent a basis of $H_{1}(\mathcal{B}_c)$.

Now, the non-local Faraday's equations are expressed as
\begin{equation}\label{nl_faraday}
\langle \tilde{\mathbf{U}},\partial d_j \rangle = \langle - i \omega \tilde{\boldsymbol{\Phi}},d_j \rangle, j \in \{1,\ldots,\beta_1(\mathcal{K}_a)\}.
\end{equation}
A novel way to express the $j$th non local Faraday law in term of the unknowns is to pre-multiplying by $\mathbf{c}^{j\,T}$:\footnote{This correspond to algebraically sum local Faraday's equations enforced on dual faces belonging to the support of the considered cut. Since the contributions in the interior cancel out, what remains is the non local Faraday's law enforced on the boundary of the considered cut, for more details see \cite{cmame}. The cochains on primal complex are denoted by column vectors $\mathbf{c}^i$. For the sake of parsimony in the notation, the chains $d_i$ on the dual complex dual to $\mathbf{c}^i$ are denoted by $\mathbf{c}^{iT}$.}
$$\mathbf{c}^{j\,T} \left(\delta \tilde{\mathbf{U}} + i \omega \tilde{\boldsymbol{\Phi}}\right)=0,$$
and using the same passages as when obtaining (\ref{eq1prime}) we get
\begin{equation}\label{nl_faraday2}
\left(i \omega \mathbf{c}^{j\,T} \boldsymbol{\mu} \delta \right) \boldsymbol{\Omega} + \left(\mathbf{c}^{j\,T} \mathbf{R}\right) \mathbf{T} + \sum_{j=1}^{\beta_1(\mathcal{K}_a)} \left(\mathbf{c}^{j\,T} \mathbf{R} \mathbf{c}^j\right) i_j=0.
\end{equation}

By multiplying the (\ref{eq1prime}) by $i \omega$ and considering also the equations (\ref{eq2prime}) and (\ref{nl_faraday2}), the following final symmetric algebraic system having $\left.\mathbf{T}\right|_{\mathcal{K}_c\setminus\mathcal{K}_a}$, $\boldsymbol{\Omega}$ and the $\{i_j\}_{j=1}^{\beta_1(\mathcal{K}_a)}$ as unknowns reads as
\begin{equation}\label{final}
\begin{array}{lr}
i \omega \tilde{\delta} \boldsymbol{\mu} \delta \boldsymbol{\Omega}
+
i \omega \tilde{\delta} \boldsymbol{\mu} \mathbf{T}
+
 \sum_{j=1}^{\beta_1(\mathcal{K}_a)} i \omega \tilde{\delta} \boldsymbol{\mu} \mathbf{c}^j\,i_j  = \mathbf{0},&\\
\langle i \omega \boldsymbol{\mu} \delta \boldsymbol{\Omega} + \mathbf{R T} + \sum_{j=1}^{\beta_1(\mathcal{K}_a)} \mathbf{R} \mathbf{c}^j i_j, f_{\mathcal{B}} \rangle = 0,& \forall f_{\mathcal{B}} \in \mathcal{B}_c \setminus \mathcal{B}_a,\\
\left(i \omega \mathbf{c}^{j\,T} \boldsymbol{\mu} \delta \right) \boldsymbol{\Omega} + \left(\mathbf{c}^{j\,T} \mathbf{R}\right) \mathbf{T} + \sum_{j=1}^{\beta_1(\mathcal{K}_a)} \left(\mathbf{c}^{j\,T} \mathbf{R} \mathbf{c}^j\right) i_j=0, &j \in \{1,\ldots,\beta_1(\mathcal{K}_a)\},\\
\langle \mathbf{T},e \rangle =0, &\forall e \in \mathcal{K}_c\setminus \mathcal{K}_a.
\end{array}
\end{equation}
The source of the problem can be enforced by considering one of the currents $i_j$ as known, substituting it into (\ref{final}), and moving its contribution on the right-hand side of the system. Alternatively, one can force an e.m.f. by putting its value on the right-hand side of the system at the position of the correspondent non-local Faraday's equation, see \cite{sst}.

\section{A historical survey on the definitions of cuts}\label{coh}
In this Section, three families of definition of cuts presented in the literature in the last twenty-five years are reviewed. Due to the use of the Finite Elements with nodal basis functions, most definitions concentrate on the so-called \textit{thin cuts}, which are $2$-chains on the primal complex.
With the modern Finite Elements employing edge elements basis functions, cuts defined as in this paper---called \textit{thick cuts}---are needed in place of the thin cuts. Even though it is difficult to generate a thick cut from a thin cut in general, see \cite{cmame}, the definitions of thin cuts presented in the following may be easily adapted as attempts to define thick cuts also. With nodal basis functions, there is the need to impose a potential jump across the thin cuts. This is usually performed by ``cutting'' the cell complex in correspondence of the thin cuts doubling the nodes belonging to each cut. This method requires non self-intersecting thin cuts and provides some complication when cuts intersect, see for example \cite{phung}. On the contrary, the use of edge elements, as done in this paper, yields to a straightforward implementation even when cuts intersect or, as frequently happens, have self-intersections.

\subsection{Embedded sub-manifolds}
Kotiuga, starting from 1986, published many papers about the definition of cuts, proof of their existence and the development of an algorithm to compute them, see \cite{k1}, \cite{k2}, \cite{k3}, \cite{GrossKotiuga}. He defined thin cuts as embedded sub-manifolds being generators of the second relative homology group basis $H_2(\mathcal{K}_a,\partial \mathcal{K}_a)$.
%\textbf{PD -- I suggest to remove this part of the text. We cannot say everything why Kotty wants to use this definition and so on... There are references to his paper, so when one is interested, may have a try :) Suggest to remove until ''non-physical problems solutions''.}
%Basically self-intersections of a cut are not allowed, probably because it causes complications when nodal Finite Elements are employed.
He proposed also an algorithm to automatically generate cuts: first a $H^1(\mathcal{B}_a)$ basis is computed by employing a reduction technique based on a tree-cotree decomposition followed by a reordering and a classical Smith Normal Form computation \cite{Munkres}. Then, a non-physical Poisson problem is solved for each cut. Finally, cuts are extracted as iso-surfaces of the non-physical problems solutions.

This definition, although a real breakthrough, is too conservative when employing the modern edge element basis functions. In fact, in this case there is no need for cuts to be embedded surfaces, so the solution of the non-physical problems---which is computationally quite costly---can be avoided.

\subsection{Homotopy-based definition}
After Kotiuga's definition, many researchers were persuaded about the existence of an easier and more intuitive way to tackle this problem. In our opinion, two reasons diverted researchers on heuristic solutions: the first is due to the fact that algebraic topology, namely cohomology theory, was---and probably still is---not well known to scientists working in computational electromagnetics. The second---perhaps even more important---was the lack of efficient algorithms for the computation in a reasonable time of cohomology generators.

In \cite{bott}, Bott and Tu stated ``By some divine justice the homotopy groups or a finite polyhedron or a manifold seem as difficult to compute as they are easy to define.'' In fact, a homotopy-based definition of cuts has been introduced becoming soon the most popular one. The idea is to introduce a set of $2$-cells whose removal transform the insulating region into a connected and simply-connected one \cite{simkin1}, \cite{binns1}, \cite{rodger}, \cite{simkin2}, \cite{dular}\footnote{This is an attempt for a homotopy-based definition of thin cuts. Ren in \cite{ren} modifies this definition for the thick cuts. A thick cut is defined as a set of $3$-cells whose removal transform the insulating region into a simply-connected one.}.
Nonetheless, when dealing with homotopy, one falls easily into intractable problems, with a consequent lack of rigorous proofs and details of the algorithms in all the cited papers.

It has been already shown, for example in \cite{binns2}, \cite{binns3}, that there exist cuts that do not fulfil the homotopy-based definition. Namely, in case of a knot's complement, the cut realization as embedded sub-manifold---which is a Seifert surface---has to leave the complement multiply-connected. Even though this counter-example was quite clear, dealing with knotted conductors is extremely uncommon in practice. It has been concluded that problems in the homotopy-based definition arise only when dealing with knot's complement which, as written explicitly by Bossavit \cite[p. 238]{bossa_libro}, are really marginal in practice.

Nonetheless, computational electromagnetics community seems not to be aware that problems do happen frequently even with the most simple example possible, namely a conducting solid torus in which the current flow. In fact, in this paper we present for the first time a concrete counter-example that the homotopy-based definition of cuts is not only too restrictive but wrong. What is even more serious is that it makes very hard even to detect if a potential cut is correct or not, which clearly shows that this heuristic definition of cuts should be abandoned.

\begin{figure} [!h]
\centering
\includegraphics[width=10cm]{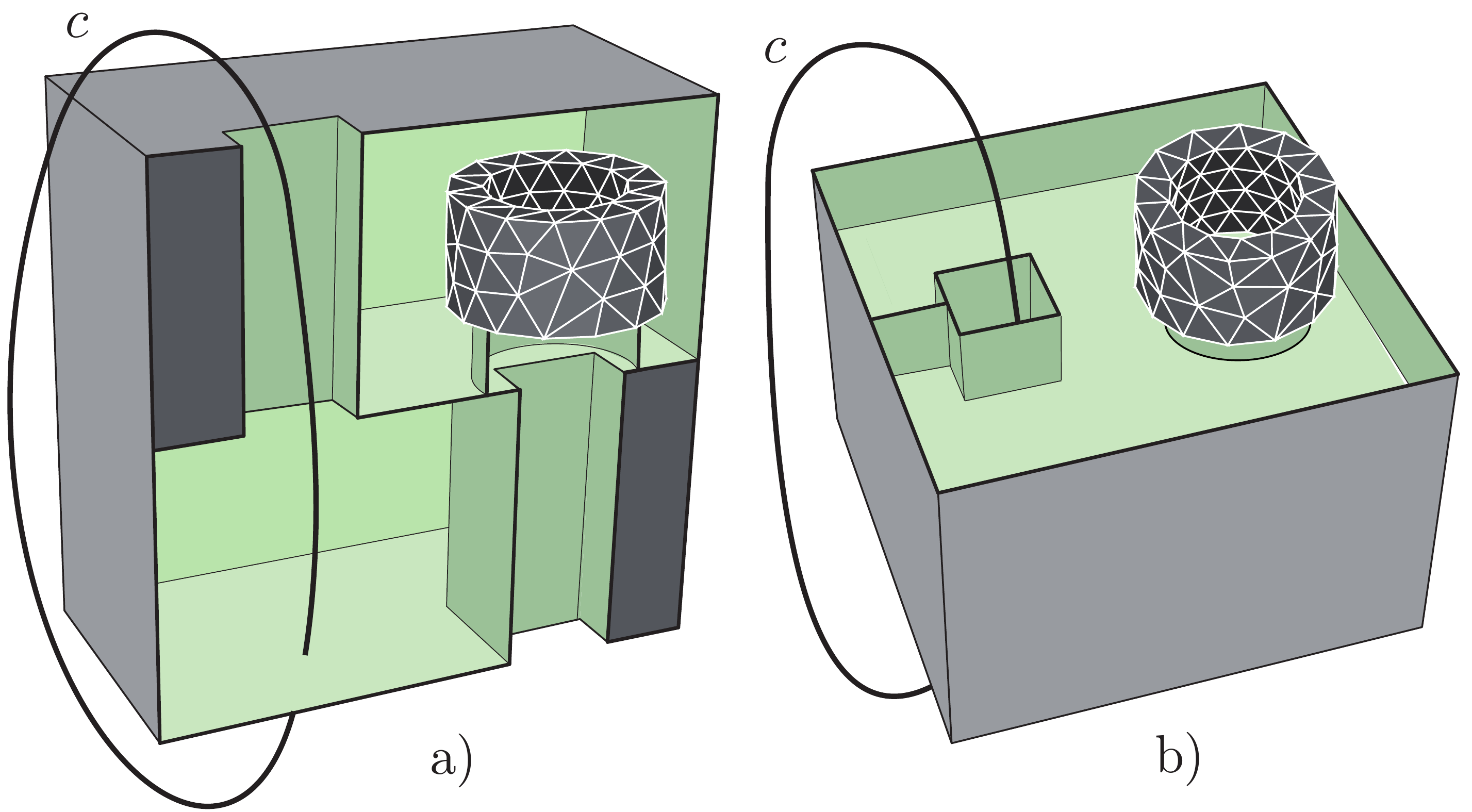}
\caption{The Bing's house counter-example.} \label{fig:bing}
\end{figure}

The counter-example is as follows. Consider a solid torus, which represents $\mathcal{K}_c$, and its complement $\mathcal{K}_a$ with respect to a ball, which contains the solid torus. By growing an acyclic sub-complex and taking the complement, the set of $2$-cells in Fig. \ref{fig:bing} is obtained. Two views are shown in the picture by cutting the set of $2$-cells with a vertical plane (Fig. \ref{fig:bing}a) and a horizontal plane (Fig. \ref{fig:bing}b). The triangulated torus represents the $\mathcal{K}_c$ complex (the torus is not cut with the planes in the picture for the sake of clarity. The ball which contains $\mathcal{K}_c$ is not shown either for the same reason.).
The set of $2$-cells is formed by a Bing's house \cite{bing} plus a cylinder. Informally, the torus $\mathcal{K}_c$ is placed in the `upper chamber' of the Bing's house and the hole of the torus is connected to the tunnel used to enter the upper chamber by the cylinder.

It is standard to see that once one removes the set of $2$-cells from the complex $\mathcal{K}_a$, what remains becomes connected and simply-connected.
Hence, this set of $2$-cells fulfil the homotopy-based definition of cut, but of course this is not a cut since the $1$-cycle $c$ crosses one time the cut without linking any current. Moreover, it is very hard in practice to detect this situation, which makes this definition---and related algorithms---not suitable even with the simplest example.

\subsection{Axiomatic definition}
An axiomatic definition of cuts is frequently used in mathematical papers, see for example~\cite{s1},\cite{s2},\cite{s3},\cite{s4},\cite{s5},\cite{s6},\cite{s7},\cite{s8},\cite{s9},\cite{s10},\cite{s11},\\
\cite{s12}. Cuts are defined as $2$-manifolds with boundary $\{\Sigma_j\}_{j=1}^{n}$ which fulfil the following set of axioms:
\begin{itemize}
 \item The boundary of $\Sigma_j$ is located at the boundary of the meshed region in which the cuts are searched for,
 \item $\Sigma_j \cap \Sigma_i = \emptyset$ for $i \not = j$,
 \item $\Omega \setminus \bigcup_{i=1}^{n} \Sigma_i$ is pseudo-Lipschitz and simply-connected.
\end{itemize}
Once such a set of cuts is provided, the results presented in those papers can be used.
However, the presented definition of cuts is too restrictive for practical applications. One can easily see that even in case of two chained conductors it is not possible to find a set of cuts for which $\Sigma_j \cap \Sigma_i = \emptyset$ holds. The same holds for many practical configurations, for example, electric transformers.

Moreover, this axiomatic definition does not point to an algorithm to find cuts automatically, which is fundamental for practical problems since it is practically impossible to define cuts `by hand' for serious problems.

\section{Cohomology computation}
\label{sec:numericalEx}
A detailed survey on the state-of-the-art algorithms to compute cohomology group generators used in electromagnetic modeling can be found in~\cite{cmes}.
The solution proposed in this paper is to change the available codes for computing homology group generators (see~\cite{capd}, \cite{chomp}) to compute the cohomology group generators. The necessary changes, described in detail in~\cite{cmes}, are very easy to implement.

In order to obtain a computationally efficient code, a so-called \textit{shaving} procedure for cohomology has to be applied.
A \textit{reduction} in (co)homology is a procedure of removing from the complex some cells in such a way that the (co)homology groups of the complex remains unchanged. Then, the classical Smith Normal Form \cite{Munkres} computation with hyper-cubical computational complexity can be performed on the reduced complex.
A \textit{shaving} is a reduction of the complex such that the representatives of generators in the reduced complex are also representatives of generators in the initial complex. As it is explained in~\cite{cmes}, the algorithm presented in~\cite{acyclicSubspace} is a shaving for cohomology computations.

Due to a number of efficient reduction techniques used (namely, \cite{acyclicSubspace} followed by \cite{kms}), in all the tested cases the Smith Normal Form computation has been not used at all. The state-of-the-art is the implementation of the acyclic sub-complex shaving with look-up tables~\cite{cmes}, whose computational complexity is linear and is able to reduce almost always the complex down to its cohomology generators.

An example of cut generated for the complement of a trefoil knot-shaped conductor is presented in Fig. \ref{fig:trefoil}.
\begin{figure} [!h]
\centering
\includegraphics[width=12cm]{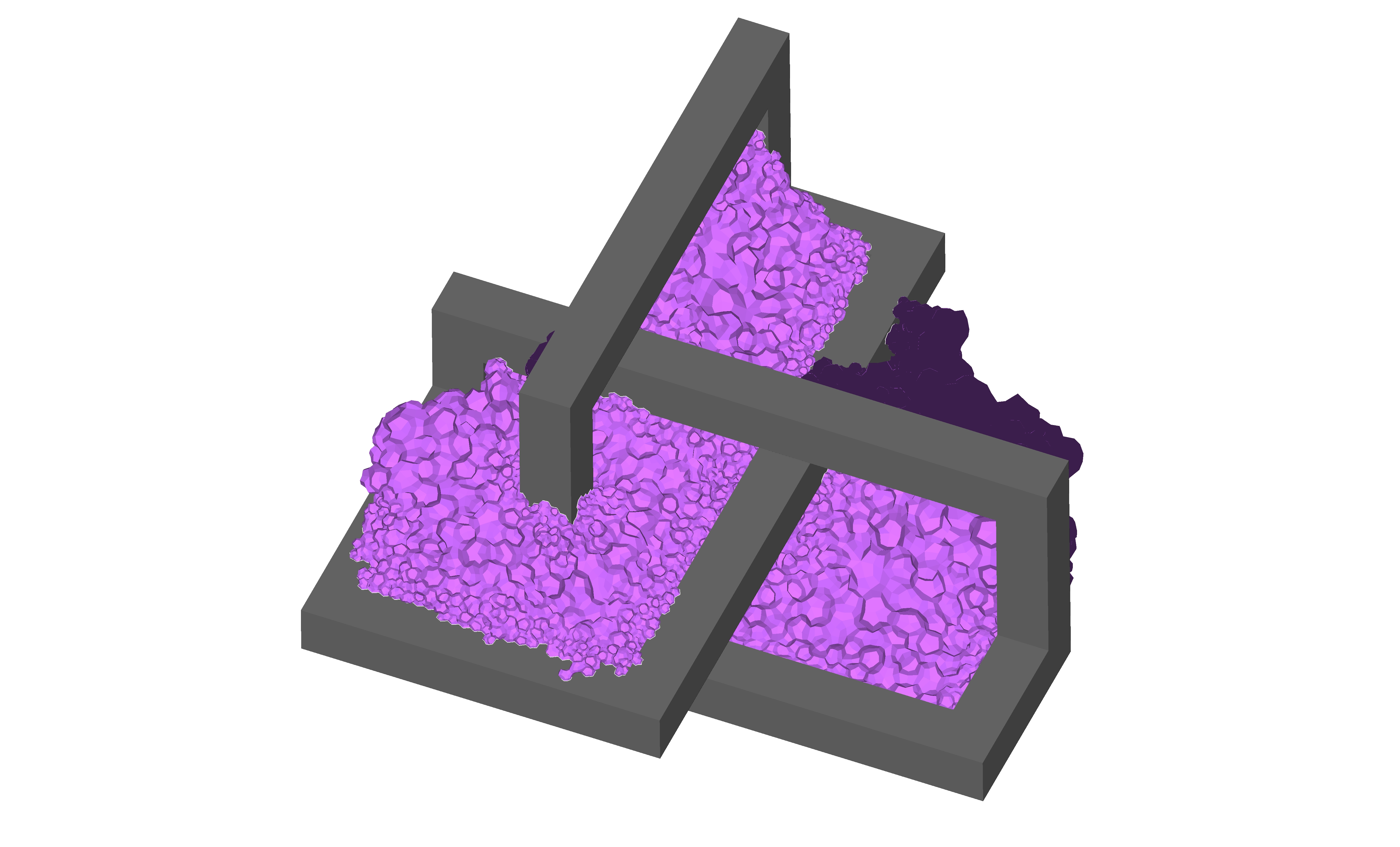}
\caption{A trefoil knot conductor together with the dual faces dual to edges belonging to the support of the cut.}
\label{fig:trefoil}
\end{figure}

We would like to point out that it is necessary for the potential design to compute the 1st cohomology group generators over integers and this cannot be substituted by any field $\mathbb{Z}_p$ for $p$ prime. In fact, let us consider $\mathbb{Z}_2$ as an example. In Fig. \ref{fig:z2}a, a two turn conductor is shown. In Fig. \ref{fig:z2}b, the support of a $2$-chain dual to a representative of a $H^1(\mathcal{K}_a,\mathbb{Z}_2)$ generator is presented. When a cycle surrounding the two branches of the conductor is considered, it does not intersect the support of the chain. It is easy to verify that on this cycle the Amp\`{e}re's law does not hold. Similar examples can be constructed for any coefficient field $\mathbb{Z}_p$.
\begin{figure} [!h]
\centering
\includegraphics[width=10cm]{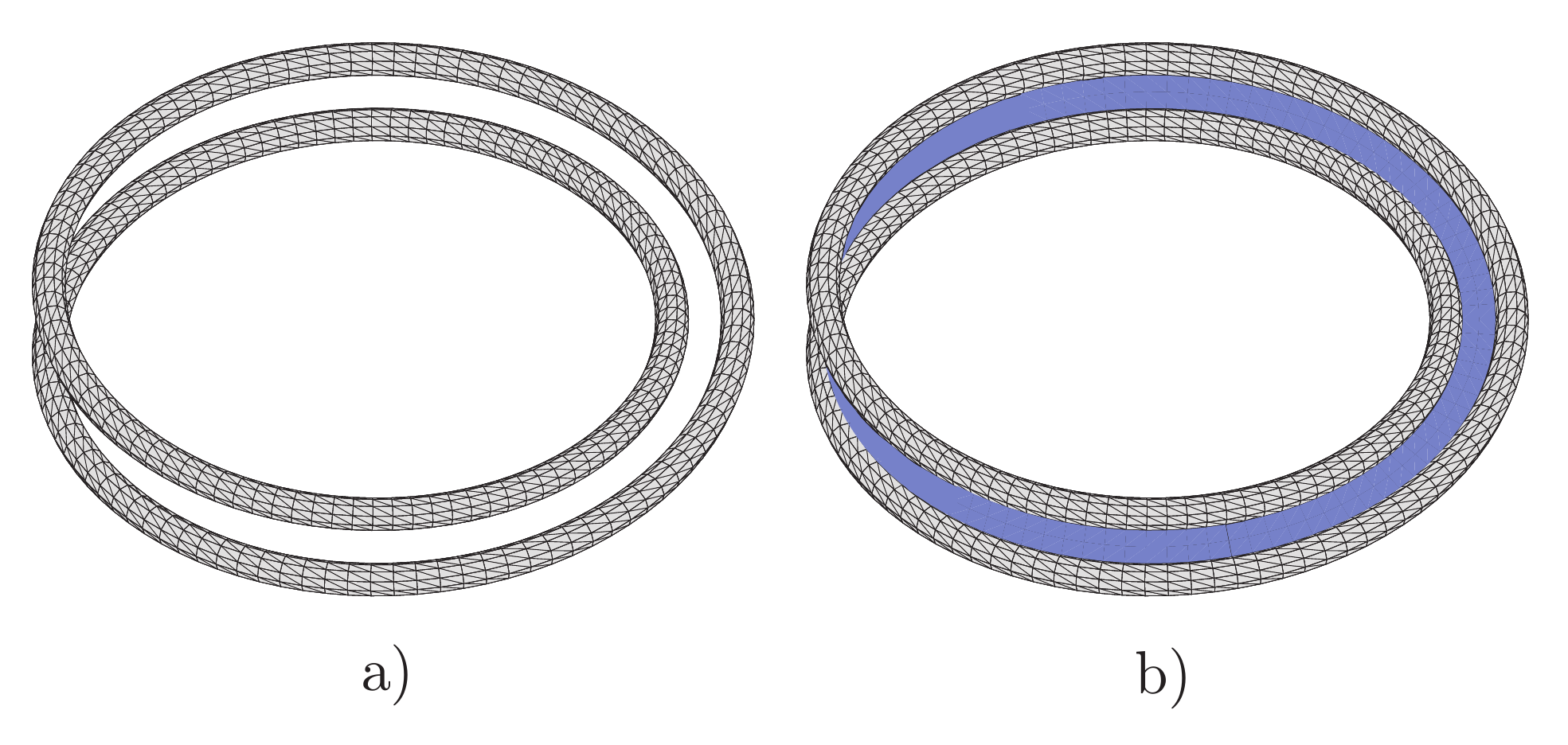}
\caption{a) A two turn conductor. b) The support of a $2$-chain dual to a representative of a $H^1(\mathcal{K}_a,\mathbb{Z}_2)$ generator.} \label{fig:z2}
\end{figure}

\section{Conclusions}
\label{sec:conclusions}
In this paper, is has been discussed how the (co)homology theories are fundamental for the potential design in computational physics.
In particular, a systematic design of potentials employed in the magneto-quasi-static $T$-$\Omega$ formulation has been presented.
It has been demonstrated that the entities called \textit{cuts} in computational electromagnetics are a basis of the first cohomology group over integers of the insulating region. The limitations on the definition of cuts presented in the literature are shown by using concrete counter-examples, which should persuade the Readers that cohomology is not one of the possible options but something which is expressly needed to the potential design.

\section*{Acknowledgments}
The Authors would like to thank Prof. P.R. Kotiuga for many useful discussions and suggestions.

\end{document}